\numberwithin{equation}{section}
\def\P{\mathbb P}
\def\E{\mathbb E}
\def\l{\left}
\def\r{\right}
\def\ee{\varepsilon}
\newtheorem{theorem}{Theorem}[section]
\newtheorem{lemma}[theorem]{Lemma}
\newtheorem{proposition}[theorem]{Proposition}
\theoremstyle{remark}
\newtheorem{remark}[theorem]{Remark}
\theoremstyle{definition}
\theoremstyle{remark}
\numberwithin{equation}{section}
\begin{document}
	\title{ On a Tur\'an Conjecture and random multiplicative functions  }
	\author{Rodrigo Angelo}
	\address{Department of Mathematics, Stanford University, Stanford, CA, USA}
	\email{rsangelo@stanford.edu}
	\author{Max Wenqiang Xu}
	\address{Department of Mathematics, Stanford University, Stanford, CA, USA}
	\email{maxxu@stanford.edu}
	\begin{abstract}
We show that if $f$ is the random completely multiplicative function, the probability that $\sum_{n\le x}\frac{f(n)}{n}$ is positive for every $x$ is at least $1-10^{-45}$, while also strictly smaller than $1$. For large $x$, we prove an asymptotic upper bound of $O(\exp(-\exp( \frac{\log x}{C\log \log x })))$ on the exceptional probability that a particular truncation is negative, where $C$ is some positive constant.
\end{abstract}
	\maketitle
\section{Introduction}	Tur\'an noticed that if every truncation of the sum
	$$\sum_{n\ge 1} \frac{\lambda(n)}{n}$$
	is positive, where $\lambda $ is the Liouville function, the Riemann hypothesis would follow \cite{turan}. This conjecture\footnote{While the literature refers to this as T\'uran's conjecture, strictly speaking he never conjectured this, only remarked on the relationship with the Riemann hypothesis.} was first disproved by Haselgrove \cite{Has}, and eventually Borwein, Ferguson, and Mossinghoff \cite{BFM} found that 
\begin{equation}\label{eqn: N_0}
    N_0 = 72, 185, 376, 951, 205
\end{equation} is the smallest integer $x$ such that $\sum_{n\le x} \lambda(n)/n$ is negative.  See \cite{GS07} for more discussion.
	
	Tur\'an's conjecture inspired our study of the positivity of these sums for a random completely multiplicative function instead. See similar flavor problems studied in \cite{aymone, kalm}. The random completely multiplicative function is defined to be $f(p) = \pm 1$ with probabilities $\frac{1}{2}$ independently at each prime, and it is extended completely multiplicatively to all natural numbers.
	We are then interested in the probability of the event that for every $x$ the partial sum
	\begin{equation}\label{eqn: partial sum}
	   \sum_{n\le x} \frac{f(n)}{n}    
	\end{equation}
is positive. This probability is at most $1-2^{-\pi(N_0)}$ ($\pi(N_0)$ denotes the number of primes up to $N_0$), since if $f$ matches $\lambda$ at each $n \le N_0$, then the truncation at $x= N_0$ fails to be positive. We prove however that this probability is still very close to $1$:
\begin{theorem}\label{thm: numerical}
	Let $f$ be a sample of the random completely multiplicative function. The probability that
		$$\sum_{n\le x} \frac{f(n)}{n} $$
		is positive for every $x$ is at least $1-10^{-45}$.
	\end{theorem}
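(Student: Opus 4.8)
The plan rests on an exact decomposition of the partial sums $S(x):=\sum_{n\le x}f(n)/n$.

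\textbf{The key identity.} For completely multiplicative $f$ and every $x$,
\[
S(x)\;=\;\prod_{p\le x}\Bigl(1-\tfrac{f(p)}{p}\Bigr)^{-1}\;-\;\sum_{\substack{n>x\\ n\text{ is }x\text{-smooth}}}\frac{f(n)}{n}\;=:\;D(x)-T(x),
\]
since $\prod_{p\le x}(1-f(p)/p)^{-1}=\sum_{n\text{ is }x\text{-smooth}}f(n)/n$ and every $n\le x$ is $x$-smooth. Three elementary facts drive the argument: (i) $D(x)>0$ always, and in fact $D(x)\ge\prod_{p\le x}(1+1/p)^{-1}=:d(x)\asymp 1/\log x$; (ii) although $T(x)$ is built from $\gg x$ terms, it is small in mean square,
\[
\mathbb E\,|T(x)|^{2}=\sum_{\substack{m,n>x\text{ are }x\text{-smooth}\\ mn\text{ is a perfect square}}}\frac1{mn}\ll\frac{\log x}{x},
\]
as one checks by writing $m=da^{2},\,n=db^{2}$ with $d$ squarefree and summing over $a,b$; and (iii) $\sum_{n\le x}\lambda(n)/n>0$ for every $x<N_0$.

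\textbf{A positive martingale and the tail range.} Because $1/D(x)=\prod_{p\le x}(1-f(p)/p)$ with $\mathbb E[1-f(p)/p]=1$ and $f(p)$ independent of $(f(q))_{q<p}$, the process $x\mapsto 1/D(x)$ is a nonnegative martingale of constant expectation $1$, bounded in $L^{2}$ (its second moment equals $\prod_{p}(1+1/p^{2})$), hence uniformly integrable with terminal value $1/L$ where $L:=\prod_{p}(1-f(p)/p)^{-1}=\lim_x S(x)$. Doob's maximal inequality then gives the clean bound
\[
\mathbb P\Bigl(\inf_{x}D(x)\le t\Bigr)\le t\quad(t>0),\qquad\text{in particular}\qquad\mathbb P(L\le t)\le t.
\]
For large $x$ one couples this with a fluctuation estimate: from $\mathbb E\bigl|\sum_{n>x}f(n)/n\bigr|^{2}\ll(\log x)/x$ and a maximal inequality, $\mathbb E\sup_{x\ge X_0}|S(x)-L|^{2}\ll(\log X_0)^{O(1)}/X_0$; so choosing $t=10^{-46}$ and $X_0$ a large enough power of $10$, on $\{L>t\}\cap\{\sup_{x\ge X_0}|S(x)-L|<t\}$ every partial sum with $x\ge X_0$ is positive, and the complementary event has probability well below $10^{-45}$.

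\textbf{The bounded range and the main obstacle.} It remains to bound $\mathbb P\bigl(S(x)\le 0\text{ for some }x\le X_0\bigr)$. By the key identity $S(x)\le 0$ forces $T(x)\ge D(x)\ge d(x)\asymp 1/\log x$, so it is enough to keep $T$ below this deterministic barrier. Above a modest threshold this should follow from a union bound over dyadic blocks fed by \emph{high} even moments of $T$ — the second moment being far too weak — i.e.\ from estimating $\mathbb E\,|T(x)|^{2k}$, a weighted count of $2k$-tuples of $x$-smooth integers exceeding $x$ whose product is a perfect square, and showing it does not much exceed $(\log x/x)^{k}$. Below that threshold one conditions on the signs $(f(p))_{p\le y}$ and controls the probability of the offending patterns directly; the extremal case is $f\equiv\lambda$, which does make $S(N_0)<0$ but has probability only $2^{-\pi(N_0)}$, utterly negligible against $10^{-45}$, and any pattern driving $S$ negative early must resemble it and so is just as rare. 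The crux is precisely the high-moment bound for $T(x)$ — equivalently for the $1/n$-weighted partial sums of the random multiplicative function — uniformly over the bounded range, together with the optimisation of the moment order against the dyadic decomposition and the target $10^{-45}$. This is a multiplicative-structure analogue of the subtle estimates for $\mathbb E\,|\sum_{n\le x}f(n)|^{2k}$, where the diagonal term is not the whole story: bounding the off-diagonal contribution uniformly, and down to as small an $x$ as possible so that the residual explicit verification for small sign patterns is feasible, is where essentially all the work lies; the martingale bound, the tail range, and the bookkeeping are comparatively routine.
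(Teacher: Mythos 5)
Your decomposition $S(x)=D(x)-T(x)$, with $D(x)=\prod_{p\le x}(1-f(p)/p)^{-1}$ and $T(x)$ the $x$-smooth tail beyond $x$, is exactly the paper's starting point, and you correctly identify that high even moments of $T$ must do the heavy lifting. But the proposal has two genuine gaps. The first is structural: the paper's Proposition~2.1 shows that for \emph{every} completely multiplicative $f$ with values in $\{\pm1\}$, not just $\lambda$, one has $\sum_{n\le x}f(n)/n>0$ for all $x<N_0$, deterministically. The proof is one line: $g:=f*|\mu|$ takes values in $\{0,2\}$ so is nonnegative, $f=g*\lambda$, hence
\[
\sum_{n\le x}\frac{f(n)}{n}=\sum_{m\le x}\frac{g(m)}{m}\sum_{\ell\le x/m}\frac{\lambda(\ell)}{\ell},
\]
a nonnegative combination of Liouville truncations below $N_0$, with the $m=1$ term strictly positive. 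This removes the small-$x$ range with probability exactly $1$. You cite fact (iii) but then propose to handle small $x$ by ``conditioning on signs and controlling the probability of offending patterns,'' arguing heuristically that anything driving $S$ negative early ``must resemble $\lambda$.'' That is not a proof, and it is not needed once you have the $g*\lambda$ identity; without it the small-$x$ range is the hardest part of the problem, not an afterthought.

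The second gap is quantitative. Doob's maximal inequality on the martingale $1/D(x)$ gives only $\mathbb P(\inf_x D(x)\le t)\le t$, linear in $t$; to reach $10^{-45}$ you must take $t\approx10^{-46}$, and then the complementary requirement $|S(x)-L|<t$ cannot be met by the $L^2$ fluctuation bound at any reasonable $X_0$ (it forces $X_0\gtrsim10^{92}$). You patch this by switching to the deterministic barrier $D(x)\ge\prod_{p\le x}(1+1/p)^{-1}\asymp1/\log x$, but then you must show $T(x)\lesssim1/\log x$ uniformly over $[N_0,X_0]$, a strictly harder target than the paper's fixed $\delta=0.12$. The paper instead proves a Chernoff-type bound $\mathbb P\bigl(D(x)\le\delta\bigr)\le\exp\bigl(\log P_\lambda-\lambda\log(1/\delta)+0.05\lambda\bigr)$ by Markov on $\mathbb E\,D(x)^{-\lambda}$ (Lemma~2.5), which is exponentially small in $\lambda$, and then uses Etemadi's inequality (Lemma~2.7) to show the product stays above a fixed constant past $N_0$. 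Finally, the high-moment estimate for $T(x)$ --- which you rightly call ``where essentially all the work lies'' --- is not actually carried out; it is the content of the paper's Lemmas~2.8 and 2.9. So the proposal is a correct sketch of the overall strategy with a clever martingale observation, but it is missing the deterministic positivity for $x<N_0$, a bound on $D(x)$ strong enough for the target probability, and the moment computation that makes the tail estimate explicit.
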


We also study the probability that a particular truncation is negative for large $x$, obtaining an asymptotic result.


\begin{theorem}\label{thm: truncate}
		Let $f$ be a sample of the random completely multiplicative function. There exists a constant $C>0$ such that for large $x$, the probability that
		$$\sum_{n\le x} \frac{f(n)}{n} > 0$$
		is at least $1- O(\exp(-\exp( \frac{\log x}{C\log \log x })))$.
	\end{theorem}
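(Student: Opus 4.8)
The plan is to split the sum at the level of $z$-smooth integers, for a slowly growing smoothness parameter $z$, showing that the smooth part is positive for a trivial reason and that the rough part is super-polynomially concentrated at $0$. Fix $z=z(x):=\exp\!\big(\tfrac{\log x}{c\log\log x}\big)$ for a sufficiently large absolute constant $c$, and write $\sum_{n\le x}\tfrac{f(n)}{n}=M+E$, where $M$ is the contribution of the $z$-smooth $n\le x$ and $E$ that of the rest. Writing the Euler product $P_z(f):=\sum_{n\ z\text{-smooth}}\tfrac{f(n)}{n}=\prod_{p\le z}(1-\tfrac{f(p)}{p})^{-1}$, we have $M=P_z(f)-\sum_{n>x,\ z\text{-smooth}}\tfrac{f(n)}{n}$. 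Each Euler factor is minimised at $f(p)=-1$, so $P_z(f)\ge\prod_{p\le z}(1+\tfrac1p)^{-1}\sim\tfrac{\pi^2}{6e^{\gamma}}(\log z)^{-1}$ by Mertens; on the other hand Rankin's trick gives $\sum_{n>x,\ z\text{-smooth}}\tfrac1n\le x^{-\theta/\log z}\prod_{p\le z}(1-p^{\theta/\log z-1})^{-1}\ll(\log x)^{-\theta c+O_\theta(1)}$ for any $\theta\in(0,\log 2)$, which is $o(1/\log z)$ once $c$ is large. Hence $M\ge m_0$ for \emph{every} realisation of $f$, where $m_0\asymp\log\log x/\log x>0$; consequently $\sum_{n\le x}\tfrac{f(n)}{n}\le 0$ forces $E\le-m_0$, so it suffices to prove $\mathbb{P}(|E|>m_0)\le\exp\!\big(-\exp(\tfrac{\log x}{C\log\log x})\big)$.

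\textbf{The rough term.} Condition on $\mathcal{F}_z:=\sigma(f(p):p\le z)$. Factoring $n$ into its $z$-smooth and $z$-rough parts, $E=\sum_{r>1,\ z\text{-rough},\ r\le x}\tfrac{f(r)}{r}\,g_r$ with $g_r:=\sum_{m\le x/r,\ z\text{-smooth}}\tfrac{f(m)}{m}$ being $\mathcal{F}_z$-measurable and $|g_r|\le L_z:=\prod_{p\le z}(1-\tfrac1p)^{-1}\asymp\log z$; given $\mathcal{F}_z$, $E$ is a polynomial in the i.i.d.\ signs $\{f(p):z<p\le x\}$. Now split $E=\sum_{1\le j\le u}E_j$ by $\Omega(r)=j$, where $u:=\log x/\log z=c\log\log x$ (a $z$-rough $r\le x$ has at most $u$ prime factors); each $E_j$ is a Rademacher chaos of degree $\le j$. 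Expanding $\mathbb{E}[E_j^2\mid\mathcal{F}_z]$ over pairs $r,r'$ with $rr'$ a square (so that $r,r'$ have equal squarefree kernel) and using $\sum_{p>z}p^{-2}\ll(z\log z)^{-1}$, one finds $\mathbb{E}[E_j^2\mid\mathcal{F}_z]\le A\,L_z^2\,(A/(z\log z))^{j}$ for an absolute $A$, and likewise $|\mathbb{E}[E_j\mid\mathcal{F}_z]|\le L_z(A/(z\log z))^{j/2}$ (this vanishes when $j$ is odd).

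\textbf{Conclusion via hypercontractivity.} For a mean-zero Rademacher chaos $X$ of degree $\le d$ one has $\mathbb{P}(|X|>\lambda\|X\|_2)\le\exp(-c_0\lambda^{2/d})$ for $\lambda\ge e^{d/2}$, by Bonami--Beckner hypercontractivity and Markov's inequality. Apply this to $X=E_j-\mathbb{E}[E_j\mid\mathcal{F}_z]$ with threshold $m_0\,2^{-j-1}$: since $\|X\|_{L^2(\cdot\mid\mathcal{F}_z)}\ll L_z(A/(z\log z))^{j/2}$ and $|\mathbb{E}[E_j\mid\mathcal{F}_z]|\ll m_0\,2^{-j-1}$, the relevant ratio is $\lambda_j\gg\tfrac{m_0}{L_z}2^{-j}(z\log z)^{j/2}\ge z^{j/4}(\log x)^{-O(1)}$, so $\lambda_j^{2/j}\ge\exp\!\big(\tfrac{\varepsilon\log x}{c\log\log x}\big)$ for a fixed $\varepsilon>0$, \emph{uniformly in} $j$. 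Hence $\mathbb{P}\big(|E_j|>m_0\,2^{-j-1}\mid\mathcal{F}_z\big)\le\exp\!\big(-c_0\exp(\tfrac{\varepsilon\log x}{c\log\log x})\big)$ for each of the $u=c\log\log x$ values of $j$; summing over $j$ (the factor $u$ is absorbed into the double exponential) and taking expectations gives $\mathbb{P}(|E|>m_0)\le\exp\!\big(-\exp(\tfrac{\log x}{C\log\log x})\big)$ with $C:=2c/\varepsilon$, for all large $x$.

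\textbf{Main difficulty.} The crucial point — the step I expect to be the real obstacle — is the splitting of $E$ by $\Omega(r)$ before applying hypercontractivity. One cannot apply hypercontractivity to $E$ as a single chaos of degree $u=c\log\log x$: that only yields $\exp\!\big(-\exp(\tfrac{\log x}{c^2(\log\log x)^2})\big)$, off by a power of $\log\log x$ in the inner exponent. The dyadic splitting rescues this because the conditional variance of $E_j$ decays geometrically, like $z^{-j}$, while its degree is only $j$, making $\lambda_j^{2/j}\approx z^{1/2}$ essentially independent of $j$. This in turn pins down the admissible range of $z$: it must be small enough for the Rankin estimate to make $M$ unconditionally positive (so $\log x/\log z\to\infty$) yet large enough that $z^{1/2}\ge\exp(\tfrac{\log x}{C\log\log x})$ — a window that is open exactly because $\log x/\log z$ may be taken to grow as slowly as $\log\log x$. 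Getting all the auxiliary estimates (the Rankin bound, the second moments of the $E_j$, the off-diagonal contributions) to fit inside this narrow window is where the work lies.
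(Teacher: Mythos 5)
Your proof is correct, and it takes a genuinely different route from the paper's. The paper writes $\sum_{n\le x}f(n)/n = \prod_{p\le x}(1-f(p)/p)^{-1} - S_x$ with $S_x$ the $x$-smooth tail beyond $x$, then proves a probabilistic lower bound on the full Euler product (Chernoff-type bound on $\mathbb{E}\prod(1-f(p)/p)^\lambda$ with $\lambda$ up to $x^{1/\log\log x}$) and a probabilistic upper bound on $S_x$ via a direct $2k$-th moment computation with $k = x^{1/\log\log x}$ and $\sigma = 2 - C_1\log\log x/\log x$. You instead truncate the Euler product at the much smaller level $z = \exp(\log x/(c\log\log x))$, which makes the main term \emph{deterministically} positive — the worst case $f(p)=-1$ for $p\le z$ still gives $\prod_{p\le z}(1+1/p)^{-1} \gg 1/\log z$, and the $z$-smooth tail beyond $x$ is killed by Rankin — so no probabilistic Euler-product lemma is needed at all. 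The randomness is then confined to the $z$-rough error $E$, which you control by conditioning on $\mathcal{F}_z$ and applying hypercontractivity to the dyadic pieces $E_j$ indexed by $\Omega(r)=j$. Your observation that the dyadic split is essential (the conditional variance of $E_j$ shrinks like $(z\log z)^{-j}$ while the degree is only $j$, keeping $\lambda_j^{2/j}\approx z^{1/2}$ uniformly, whereas applying hypercontractivity to $E$ as a single degree-$u$ chaos loses a factor $\log\log x$ in the inner exponent) is exactly the right point, and the arithmetic checks out: the moment bounds $\mathbb{E}[E_j^2\mid\mathcal{F}_z] \ll L_z^2(A/(z\log z))^j$ follow from $rr'$ a square and $\sum_{p>z}p^{-2}\ll(z\log z)^{-1}$, the Rankin step needs $\theta<\log 2$ and $c$ large, and the union over $u=c\log\log x$ values of $j$ is absorbed by the double exponential. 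Comparing the two approaches: the paper's is more elementary (Markov on explicit moments, no hypercontractivity) and is set up to also yield the numerical Theorem~\ref{thm: numerical} with the same lemmas; yours replaces the probabilistic Euler-product lower bound with a deterministic one, at the cost of needing the conditioned-chaos/hypercontractivity machinery for the rough part. Both land on $\exp(-\exp(\log x/(C\log\log x)))$ with the same bottleneck scale $\delta\asymp m_0\asymp\log\log x/\log x$.
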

\begin{remark}
    It is proved in \cite{GS07} that for each large $x$ there is at least one completely multiplicative function for which $\sum_{n\le x}\frac{f(n)}{n} <0$. Hence this bound cannot be improved further than $1-2^{-\pi(x)}$.
\end{remark}

\subsection*{Notation}  We write $f\ll g$ or $f =O(g)$ if there exists a positive constant $C$ such that $f\le C g$, and $f=o(g)$ if $f(x) \le \ee g(x)$ for any $\ee>0$ when $x$ is sufficiently large. We use $f * g$ to denote the Dirichlet convolution of $f$ and $g$. In this paper, we use $C, C_i$ to denote positive constants for which the exact values are not important.

\subsection*{Acknowledgement}
We are grateful to Kannan Soundararajan for helpful discussions about the problem and comments on earlier versions of this paper, and also for his excellent lectures at Stanford University on topics in number theory which provided useful background knowledge 
for this project. We thank the anonymous referee for helpful corrections and many useful suggestions, in particular, for pointing out a computational mistake at the end of the earlier version of the paper.

\section{ Truncations at all values: Proof of Theorem~\ref{thm: numerical}}
In this section, we prove a numerical bound for the probability that the random sum $\sum_{n\le x} \frac{f(n)}{n}$ is always positive.

Our proof begins with the observation that the truncations at $x\le N_0$ are always positive, not just for the Liouville function but for any completely multiplicative function (Proposition~\ref{lem: N_0}). This step is crucial as it allows us to ignore small $x$.

We then factor the random sum $\sum_{n \le x} \frac{f(n)}{n}$ as a random Euler product $\prod_{p \le x} (1-\frac{f(p)}{p})^{-1}$ plus a tail error. We prove that the original sum is positive with high probability by showing that the random product is positive and bounded away from $0$ with high probability (Proposition~\ref{prop: large product}) while the tail error is very small with high probability (Proposition~\ref{prop: large tail}). The bound on the random product uses a Chernoff type bound (Lemma~\ref{lem:euler product large}) together with an application of Etemadi's inequality on the maximum of a random walk (Lemma~\ref{lem: eteapplication}). The bound on the tail uses the method of moments (Lemma~\ref{truncationprobability}).
Along the way, we define various arithmetic sums and products to be computed numerically, which is needed to get an efficient final numeric bound. Our proof of Theorem \ref{thm: truncate} will follow the same outline, except we plug in asymptotic bounds for these sums and products.

	
\begin{proposition}\label{lem: N_0}
    Let $f$ be a completely multiplicative function with values in $\{1,-1\}$, and $N_0$ be defined as in \eqref{eqn: N_0}. Then $\sum_{n\le x} \frac{f(n)}{n}> 0$ for any $x < N_0$.
\end{proposition}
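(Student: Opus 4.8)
The plan is to prove the stronger statement that among all completely multiplicative $f$ with values in $\{1,-1\}$, the Liouville function $\lambda$ pointwise minimizes the truncation, i.e. $\sum_{n\le x}\frac{f(n)}{n}\ge \sum_{n\le x}\frac{\lambda(n)}{n}$ for every $x$; the Proposition then follows from the computation of Borwein, Ferguson and Mossinghoff recorded in \eqref{eqn: N_0}, which says precisely that $L(k):=\sum_{n\le k}\frac{\lambda(n)}{n}$ is positive for every positive integer $k<N_0$. To compare $f$ with $\lambda$ I would factor $f$ through $\lambda$ in the Dirichlet ring: set $h:=f*\lambda^{-1}$, so $f=\lambda*h$. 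Since $\lambda$ is completely multiplicative its Dirichlet inverse is $\lambda^{-1}=\mu\lambda$, and as a convolution of multiplicative functions $h$ is multiplicative with $h(1)=1$.

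The key elementary step is to evaluate $h$ on prime powers. For a prime $p$ and $k\ge 1$, only the terms with $k-j\in\{0,1\}$ survive in $h(p^k)=\sum_{j=0}^{k}f(p^j)\mu(p^{k-j})\lambda(p^{k-j})$, giving $h(p^k)=f(p)^k+f(p)^{k-1}=f(p)^{k-1}\bigl(f(p)+1\bigr)$. Hence $h(p^k)=2$ when $f(p)=1$ and $h(p^k)=0$ when $f(p)=-1$; in particular $h(n)\ge 0$ for all $n$, with $h$ supported on those $n$ all of whose prime factors $p$ satisfy $f(p)=1$. (Equivalently this is visible from $\sum_n h(n)n^{-s}=\prod_p (1+p^{-s})/(1-f(p)p^{-s})$, where each local factor is either $1$ or $1+2\sum_{k\ge 1}p^{-ks}$.) Note that when $f=\lambda$ one gets $h=\delta$, so this decomposition is consistent.

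Finally I would expand the truncation using $f=\lambda*h$. Writing $n=dm$ and grouping by $m$,
\[
\sum_{n\le x}\frac{f(n)}{n}=\sum_{dm\le x}\frac{\lambda(d)h(m)}{dm}=\sum_{m\le x}\frac{h(m)}{m}\,L(x/m),\qquad L(y):=\sum_{d\le y}\frac{\lambda(d)}{d}.
\]
For $x<N_0$ and any $m$ with $1\le m\le x$ we have $1\le \lfloor x/m\rfloor\le \lfloor x\rfloor<N_0$, so by \eqref{eqn: N_0} every factor $L(x/m)$ is positive; since $h(m)\ge 0$ with $h(1)=1$, the whole sum is at least the $m=1$ term $L(x)>0$, which gives $\sum_{n\le x}\frac{f(n)}{n}>0$ for all $x<N_0$. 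I do not expect a serious obstacle: the only non-elementary ingredient is the numerical fact about $\lambda$ from \cite{BFM}, and the rest is a one-line convolution identity together with the sign computation for $h$. The one point that needs a little care is that $L$ is a step function, so the input from \eqref{eqn: N_0} should be phrased as positivity of $L$ at every positive integer below $N_0$, and one must observe that $\lfloor x/m\rfloor$ ranges exactly over such integers.
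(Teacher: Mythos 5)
Your proof is correct and is essentially the same as the paper's: your $h = f*\lambda^{-1} = f*\mu\lambda$ is the same function as the paper's $g = f*|\mu|$ (since $\mu\lambda = |\mu|$), and the rest of the argument---showing $h(p^k)\in\{0,2\}$, expanding the convolution, and invoking positivity of the Liouville partial sums below $N_0$---matches exactly. One small caveat: your opening claim that the argument proves pointwise minimization by $\lambda$ \emph{for every} $x$ is not actually established (for $x\ge N_0$ some $L(x/m)$ could be negative and the term-by-term comparison breaks down), but the argument you actually carry out only needs $x < N_0$, so the proof of the Proposition is fine.
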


\begin{proof}
Consider $g = f * |\mu|$, where $\mu$ denotes the Möbius function. The value of this function at powers of primes is either 0 or 2, hence it is nonnegative. And because $|\mu| * \lambda $ is the identity of Dirichlet convolution, we have $f = g * \lambda$.

By expanding $f(n) = \sum_{ml=n} g(m)\lambda(l)$ we obtain
    \[ \sum_{n\le x} \frac{f(n)}{n} =  \sum_{m\le x} \frac{g(m)}{m} \sum_{\ell \le x/m} \frac{\lambda(\ell)}{\ell} . \]
Since $\frac{x}{m} \le x < N_0$, the Liouville function sums are all positive. Combining this with the fact that $g$ is non-negative and $g(1)=1$, we conclude that $\sum_{n\le x}\frac{f(n)}{n}>0$.    
\end{proof}

We now deal with $x \ge N_0$. We write the partial sum $\sum_{n\le x} \frac{f(n)}{n}$ as an Euler product plus a tail term. Let $P(n)$ denote the largest prime factor of $n$, then   
\begin{equation}\label{eqn: decomp}
    \sum_{n \le x} \frac{f(n)}{n} = \prod_{p\le x}\l(1-\frac{f(p)}{p}\r)^{-1} - \sum_{\substack{n > x\\ P(n)\le x}} \frac{f(n)}{n}.
\end{equation}
We show that the random Euler product is likely to be reasonably large and the tail is typically small. The following two propositions together imply Theorem~\ref{thm: numerical}.	

\begin{proposition}\label{prop: large product}
Let $f$ be a sample of random completely multiplicative function. Then \[
 \P \l( \prod_{p\le x}\l(1-\frac{f(p)}{p}\r)^{-1} \le   0.12 ~\text{for some $x\ge N_0$}\r)  \le 5\cdot 10^{-46}.  \]
\end{proposition}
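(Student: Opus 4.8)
The plan is to take logarithms and reduce the statement to controlling the single truncation of the Euler product at $N_0$ together with a negligible tail. Expanding $-\log(1-f(p)/p)=\sum_{k\ge 1}f(p)^k/(kp^k)$ and using $f(p)^2=1$ to separate odd from even powers yields the exact identity
\[
\prod_{p\le x}\left(1-\frac{f(p)}{p}\right)^{-1}=\exp\bigl(D(x)+S(x)\bigr),\qquad D(x):=-\frac12\sum_{p\le x}\log\!\bigl(1-p^{-2}\bigr),\qquad S(x):=\sum_{p\le x}f(p)\,a_p,
\]
where $a_p:=\frac12\log\frac{p+1}{p-1}=\operatorname{arctanh}(1/p)$. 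Here $D(x)\ge 0$ is deterministic and nondecreasing (with $D(x)\uparrow\frac12\log\zeta(2)$), while $S$ is a random walk with independent symmetric increments $\pm a_p$ and $\sum_p a_p^2<\infty$. Fix a small $\delta>0$ (say $\delta=10^{-3}$) and write $S(x)=S(N_0)+R(x)$ with $R(x):=\sum_{N_0<p\le x}f(p)a_p$. For $x\ge N_0$, using $D(x)\ge D(N_0)$ and $S(x)\ge S(N_0)+\inf_{y\ge N_0}R(y)$,
\[
\prod_{p\le x}\left(1-\frac{f(p)}{p}\right)^{-1}\ \ge\ \left(\prod_{p\le N_0}\left(1-\frac{f(p)}{p}\right)^{-1}\right)\exp\Bigl(\inf_{y\ge N_0}R(y)\Bigr),
\]
so the event in the proposition is contained in $\bigl\{\prod_{p\le N_0}(1-f(p)/p)^{-1}\le 0.12\,e^{\delta}\bigr\}\cup\bigl\{\inf_{y\ge N_0}R(y)<-\delta\bigr\}$.

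The tail event is handled by Etemadi's inequality (Lemma~\ref{lem: eteapplication}) together with a Hoeffding bound on $\P(|R(x)|\ge\delta/3)$: since $a_p\le p/(p^2-1)$, the total variance $\sum_{p>N_0}a_p^2$ of $R$ is $O(1/N_0)$, whence $\P\bigl(\inf_{y\ge N_0}R(y)<-\delta\bigr)\le 6\exp(-c\,\delta^2 N_0)$ for an absolute $c>0$, which is astronomically below $10^{-46}$ for this $\delta$.

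The main event is controlled by the Chernoff-type bound for truncated Euler products (Lemma~\ref{lem:euler product large}): writing $v:=D(N_0)+\log(1/0.12)\approx 2.37$, for every $t>0$
\[
\P\Bigl(\prod_{p\le N_0}\bigl(1-f(p)/p\bigr)^{-1}\le 0.12\,e^{\delta}\Bigr)=\P\bigl(S(N_0)\le -v+\delta\bigr)\ \le\ e^{-t(v-\delta)}\prod_{p\le N_0}\cosh(ta_p)\ \le\ e^{-t(v-\delta)}\prod_{p}\cosh(ta_p),
\]
the last step using $\cosh\ge 1$. One then optimizes over $t$ (the optimal $t$ is moderate, of order several hundred) and bounds $\sum_p\log\cosh(ta_p)$ by cutting the primes at an explicit height: the finitely many small terms are evaluated numerically, and the tail is controlled via $\log\cosh y\le\min(y,\,y^2/2)$, $a_p\le p/(p^2-1)$, and Mertens-type estimates for $\sum_{p\le Q}a_p$ and $\sum_{p>Q}a_p^2$. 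This makes $e^{-t(v-\delta)}\prod_p\cosh(ta_p)$ smaller than, say, $e^{-130}$, well inside the budget; adding the negligible tail contribution gives the claimed bound $5\cdot 10^{-46}$.

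The crux is this Chernoff estimate. A plain second-moment (sub-Gaussian) bound would only give $\exp\!\bigl(-v^2/(2\sum_p a_p^2)\bigr)\approx e^{-5.5}$, far too weak, so the exponential tilt with a large parameter $t$ is essential; equally essential is to retain the deterministic factor $D(N_0)$ --- i.e.\ to work with the actual truncated Euler product rather than with $S(N_0)$ alone --- since this lifts the effective threshold from $\log(1/0.12)\approx 2.12$ up to $\approx 2.37$, where the rate function is roughly twice as large. The genuine work is therefore to give a rigorous explicit upper bound for $\prod_p\cosh(ta_p)$ at the chosen $t$, handling the tail over primes precisely enough that the final numerical inequality holds with room to spare.
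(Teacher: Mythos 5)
Your proposal is correct and, after a change of variables, is the same argument as the paper's. Writing $\log\prod_{p\le x}(1-f(p)/p)^{-1}=D(x)+S(x)$ is exactly the factorisation the paper uses implicitly: their moment $\E X^\lambda=\prod_p\frac{(1+1/p)^\lambda+(1-1/p)^\lambda}{2}$ equals $e^{-\lambda D(x)}\prod_p\cosh(\lambda a_p)$, so the paper's Markov bound $\P(X\ge\delta^{-1})\le\delta^\lambda\E X^\lambda$ is identical to your $e^{-t(v-\delta_0)}\prod_p\cosh(ta_p)$ with $t=\lambda$ and $\ell=e^{-\delta_0}$; the deterministic drift $D(N_0)$ that you emphasise is automatically carried inside $\E X^\lambda$. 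The split at $N_0$ and the control of the fluctuation of $\prod_{N_0<p\le x}(\cdot)$ via Etemadi plus a Chernoff-type bound is likewise the paper's Lemma~\ref{lem: eteapplication}, with your $R(x)=\sum_{N_0<p\le x}f(p)a_p$ standing in for their $\sum_{N_0<p\le x}f(p)/p$ (the paper uses $\log(1+u)<u$ to pass between the two, you separate off $D$; both work, and Hoeffding versus Bernstein is immaterial here). Your truncated-product device for $\prod_p\cosh(ta_p)$ corresponds to the paper's $P_\lambda$ with cutoff $10\lambda$, and the observation that a naive sub-Gaussian bound gives only $\approx e^{-5.5}$ is a useful sanity check; just be aware that the claimed final figure ($e^{-130}$ at moderate $t$) must actually be verified numerically, exactly as the paper does with $\lambda=700$, and the margin is not enormous, so the prime cutoff must be taken high enough (the paper uses $p\le 10\lambda\approx 7000$) for the tail estimate $\sum_{p>Q}\log\cosh(ta_p)$ to be small.
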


\begin{proposition}\label{prop: large tail}
Let $f$ be a sample of random completely multiplicative function. Then \[
 \P \l(  \sum_{\substack{n > x\\ P(n)\le x}} \frac{f(n)}{n} >  0.12 ~\text{for some $x\ge N_0$}\r)  \le 5\cdot 10^{-46}.  \]
\end{proposition}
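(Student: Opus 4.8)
The plan is to discretize in $x$, reduce to a union bound over the integers, and bound each term by the method of moments at a small fixed even order.

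\emph{Reduction to integers.} For real $x$ put $m=\lfloor x\rfloor$. An integer $n$ satisfies $n>x$ iff $n>m$, and, since $P(n)$ is always a prime, $P(n)\le x$ iff $P(n)\le m$; hence $\sum_{n>x,\,P(n)\le x}\frac{f(n)}{n}$ is constant on $[m,m+1)$, so (as $N_0\in\Z$) it suffices to bound $\P\big(T(m)>0.12\text{ for some integer }m\ge N_0\big)$ with $T(m):=\sum_{n>m,\,P(n)\le m}\frac{f(n)}{n}$; this series converges since $\sum_{P(n)\le m}\frac{f(n)}{n}=\prod_{p\le m}\big(1-\frac{f(p)}{p}\big)^{-1}$ does. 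The union bound then gives $\P(\exists\,m\ge N_0:\,T(m)>0.12)\le\sum_{m\ge N_0}\P(|T(m)|>0.12)$.

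\emph{Moment estimate.} For a fixed even order $2k$ (a modest absolute constant), Markov's inequality gives $\P(|T(m)|>0.12)\le 0.12^{-2k}\,\E[T(m)^{2k}]$. Since $f$ is completely multiplicative with $\pm1$ values, $f(n_1)\cdots f(n_{2k})=f(n_1\cdots n_{2k})$ has mean $1$ when $n_1\cdots n_{2k}$ is a perfect square and $0$ otherwise, so $\E[T(m)^{2k}]=\sum\frac{1}{n_1\cdots n_{2k}}$, summed over $2k$-tuples with $n_i>m$, $P(n_i)\le m$, and $n_1\cdots n_{2k}=\square$; bounding this sum is the content of Lemma~\ref{truncationprobability}. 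Writing $n_i=a_ib_i^2$ with $a_i$ squarefree turns the square condition into $a_1\cdots a_{2k}=\square$; summing over the $b_i$ first produces, for each choice of the $a_i$, a factor $\prod_iG(a_i)$ with $G(a)=\sum_{b^2>m/a,\,P(b)\le m}b^{-2}$, which is $O(1)$ in general and $O(\sqrt{a/m})$ for $a\le m$. The remaining sum over squarefree $a_i$ with square product factors prime-by-prime as an Euler product; its dominant part is the ``pairing'' configurations --- a pairing of the $2k$ indices into $k$ pairs, each pair assigned a common squarefree part --- contributing $(2k-1)!!\,\big(\sum_{a\text{ squarefree}}G(a)^2/a^2\big)^k=(2k-1)!!\cdot O(\log m/m)^k$, and one must bound all configurations in which three or more indices share a prime by $C^k$ times this. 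This yields $\E[T(m)^{2k}]\le(C_0k\log m/m)^k$ with $C_0$ absolute, so $\P(|T(m)|>0.12)\le(C_1k\log m/m)^k$ and $\sum_{m\ge N_0}(C_1k\log m/m)^k\ll N_0\,(C_1k\log N_0/N_0)^k$; taking $k$ a modest constant and using that $N_0=72{,}185{,}376{,}951{,}205$ is large makes this last quantity far below $5\cdot10^{-46}$, with the explicit numerical sums and products entering only to pin down $C_0$.

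\emph{Main obstacle.} The work is concentrated in the moment estimate and, above all, in making its constant $C_0$ explicit and small enough. The pairing configurations are easy; the danger is the configurations in which three or more of the $n_i$ share a common prime, which is exactly where the multiplicativity of $f$ could inflate the moment --- one must bound both the number of such combinatorial types (a $k^{O(k)}$ count, harmless for a fixed small $k$) and the arithmetic sum attached to each by $C^k$ times the pairing term. One must also track carefully which $a_i$ are $\le m$: all the decay in $m$ comes from $G(a)=O(\sqrt{a/m})$ in that range, and using this bound wastefully for every $a$ would destroy the estimate by inserting factors like $(\log m)^{k^2}$.
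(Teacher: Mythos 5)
Your framework is the same as the paper's --- reduce to integers, union bound over $m\ge N_0$, and control each term via Markov's inequality applied to the $2k$-th moment --- but your treatment of the central moment bound takes a genuinely different and more delicate route, and that is exactly where the proposal is incomplete.

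The paper bounds $\E[S_x^{2k}]$ by a Rankin-type maneuver: after observing that $\E[S_x^{2k}] \le \sum_{m^2>x^{2k},\,P(m)<x} d_{2k}(m^2)/m^2$, one writes $1/m^2 = m^{-\sigma}\cdot m^{-(2-\sigma)}$ with $1<\sigma<2$ and uses $m>x^k$ to pull out the factor $x^{-k(2-\sigma)}$. The remaining unconstrained sum $\sum_{P(m)<x} d_{2k}(m^2)/m^\sigma$ factors as the Euler product $\prod_{p<x}\frac{(1-p^{-\sigma/2})^{-2k}+(1+p^{-\sigma/2})^{-2k}}{2}$, which is then truncated and bounded numerically (Lemma~\ref{truncationprobability}). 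This sidesteps all combinatorics: there is no need to classify tuples $(n_1,\dots,n_{2k})$ by the pattern of shared prime factors. The price is the slightly weaker decay $x^{-k(2-\sigma)}$ rather than the (essentially) $x^{-k}$ decay your approach would produce, but since $N_0$ is so large and the $\sigma$ can be taken close to $1$, this is more than sufficient.

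Your proposal instead decomposes $n_i=a_ib_i^2$, sums $b_i$ out to get a weight $G(a_i)$, and then analyzes the remaining sum over squarefree $a_1,\dots,a_{2k}$ with $a_1\cdots a_{2k}=\square$ combinatorially via pairings, aiming at $\E[T(m)^{2k}]\le (C_0k\log m/m)^k$. This would be a sharper estimate than the paper's if it were established. But you explicitly flag the key step --- controlling the non-pairing configurations (three or more indices sharing a prime) by $C^k$ times the pairing contribution, and making $C_0$ explicit and small --- as the ``main obstacle,'' and you do not carry it out. That step is genuinely nontrivial (this is precisely the kind of correlation structure where random multiplicative functions deviate from Gaussian behavior), and without it the proposal does not constitute a proof. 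You also need care with the constraint $a_i\le m$ as you note; using $G(a)=O(1)$ uniformly would lose all decay, while using $G(a)=O(\sqrt{a/m})$ uniformly would over-charge configurations with large $a_i$. In short: the overall plan is sound and potentially stronger than the paper's, but the hard core of it --- the explicit moment bound --- is asserted rather than proved, whereas the paper achieves the same goal by a cleaner, fully explicit Euler-product computation.
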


\begin{proof}[Proof of Theorem~\ref{thm: numerical}]
When $x\le N_0$, the partial sum is always positive by Proposition~\ref{lem: N_0}. Theorem~\ref{thm: numerical} then follows directly from Proposition~\ref{prop: large product} and Proposition~\ref{prop: large tail}.
	\end{proof}

\subsection{ Large Random Euler product}
In this subsection, we prove Proposition~\ref{prop: large product}.

We begin by showing that the random Euler product is likely to be bounded below from zero with large probability for any given $x$. 
	\begin{lemma}\label{lem:euler product large}
	Let $f$ be the random completely multiplicative function. Let $x>0$ and $\delta >  0$. Then for a parameter $\lambda \le \frac{x}{10}$ to be optimized one has:
\begin{equation}\label{eqn: delta 4}
    \P \l( \prod_{p\le x}\l(1-\frac{f(p)}{p}\r)^{-1} \le   \delta \r) \le \exp \l(\log P_{\lambda} - \lambda \log \frac{1}{\delta}+ 0.05 \lambda \r),
\end{equation}
where
$$P_\lambda : = \prod_{p \le 10 \lambda} \l(\frac{(1+1/p)^{\lambda} +(1-1/p)^{\lambda}}{2} \r).$$
	\end{lemma}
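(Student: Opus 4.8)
The plan is to recast the event as the statement that a product of positive numbers is \emph{large}, and then run a Chernoff-type moment bound. Since $1-f(p)/p\in\{1-\tfrac1p,\,1+\tfrac1p\}$ is always positive, $\prod_{p\le x}(1-f(p)/p)$ is a positive random variable, and the event $\{\prod_{p\le x}(1-f(p)/p)^{-1}\le\delta\}$ coincides with $\{\prod_{p\le x}(1-f(p)/p)\ge 1/\delta\}$. For the parameter $\lambda>0$, raising to the $\lambda$-th power (an increasing operation on positives) and applying Markov's inequality gives
\[ \P\!\left(\prod_{p\le x}\left(1-\frac{f(p)}{p}\right)^{-1}\le\delta\right)\le \delta^{\lambda}\,\E\!\left[\prod_{p\le x}\left(1-\frac{f(p)}{p}\right)^{\lambda}\right]. \]
Because the $f(p)$ are independent and uniform on $\{\pm1\}$, the expectation factors over primes into $\prod_{p\le x}\tfrac12\big((1-\tfrac1p)^{\lambda}+(1+\tfrac1p)^{\lambda}\big)$.

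The remaining task is to show this product is at most $P_\lambda\,e^{0.05\lambda}$. I would split it at $p=10\lambda$: the factors with $p\le 10\lambda$ are by definition $P_\lambda$ (here the hypothesis $\lambda\le x/10$ ensures $10\lambda\le x$, so this piece genuinely sits inside the range), and for the tail over $10\lambda<p\le x$ I would use, with $t=1/p$, the identity
\[ \frac{(1-t)^{\lambda}+(1+t)^{\lambda}}{2}=(1-t^2)^{\lambda/2}\cosh\!\left(\tfrac{\lambda}{2}\log\tfrac{1+t}{1-t}\right)\le \cosh\!\left(\tfrac{\lambda}{2}\log\tfrac{1+t}{1-t}\right), \]
combined with $\cosh z\le e^{z^2/2}$ and $\tfrac12\log\tfrac{1+t}{1-t}=t+\tfrac{t^3}{3}+\cdots=t\,(1+O(t^2))$. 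For $p>10\lambda$ one has $\lambda t<\tfrac1{10}$, so each tail factor is $\exp\big((\tfrac12+o(1))\lambda^2/p^2\big)$, and summing over primes with $\sum_{n>10\lambda}1/n^2<1/(10\lambda)$ bounds the tail product by $\exp\big((\tfrac12+o(1))\lambda^2\cdot\tfrac1{10\lambda}\big)=\exp((0.05+o(1))\lambda)$. Feeding this back in yields $\P(\cdots\le\delta)\le \delta^{\lambda}P_\lambda e^{0.05\lambda}=\exp(\log P_\lambda-\lambda\log\tfrac1\delta+0.05\lambda)$, which is the claim.

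The only delicate point is making the tail constant exactly $0.05$: one has to check that the higher-order terms in $\log\tfrac{1+t}{1-t}$ and the slack in $\cosh z\le e^{z^2/2}$ fit inside the budget $0.05\lambda$ rather than spilling over. For $\lambda\le1$ this is immediate, since $s\mapsto s^{\lambda}$ is then concave, forcing every factor to be $\le1$; for $\lambda>1$ the restriction to primes together with the genuine saving $e^{-\lambda t^2/2}$ coming from the $(1-t^2)^{\lambda/2}$ factor leave room, the breakpoint $10\lambda$ being chosen precisely so that $\tfrac12\lambda^2\cdot\tfrac1{10\lambda}=0.05\lambda$. Everything else is the routine Markov-and-independence computation above, so I expect this tail bookkeeping to be the main (though mild) obstacle.
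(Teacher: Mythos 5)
Your overall strategy is identical to the paper's: positivity of the product converts the event to an upper-tail bound, Markov applied to $X^\lambda$ where $X=\prod_{p\le x}(1-f(p)/p)$, independence factors the moment over primes, and the split at $10\lambda$ isolates $P_\lambda$ from a tail product. However, your handling of the tail factors contains a genuine (if small) gap, and it stems from a less convenient intermediate inequality than the one the paper uses.

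You bound each tail factor via the identity $\frac{(1-t)^\lambda+(1+t)^\lambda}{2}=(1-t^2)^{\lambda/2}\cosh\bigl(\frac{\lambda}{2}\log\frac{1+t}{1-t}\bigr)$, discard the $(1-t^2)^{\lambda/2}\le 1$ factor, and then apply $\cosh z\le e^{z^2/2}$. Since $\frac12\log\frac{1+t}{1-t}=t+\frac{t^3}{3}+\cdots$ is \emph{strictly larger} than $t$, this route yields $\exp\bigl(\frac{\lambda^2 t^2}{2}(1+O(t^2))\bigr)$ per factor, and after summing you honestly report $\exp((0.05+o(1))\lambda)$. But the statement requires exactly $0.05\lambda$, and the cutoff $10\lambda$ is chosen so that $\frac{\lambda^2}{2}\cdot\frac{1}{10\lambda}$ already saturates the budget with no room left over; the $o(1)$ does spill. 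You diagnose this yourself in the last paragraph and gesture at bringing back the $(1-t^2)^{\lambda/2}$ saving, but you do not carry out that bookkeeping, so as written the proof has a loose end precisely where you predicted it would. The paper sidesteps the issue entirely by using $(1\pm 1/p)^\lambda\le e^{\pm\lambda/p}$ directly (a consequence of $\log(1+u)\le u$), which gives $\frac{(1+1/p)^\lambda+(1-1/p)^\lambda}{2}\le\cosh(\lambda/p)\le\exp\bigl(\frac{\lambda^2}{2p^2}\bigr)$ with \emph{no} Taylor remainder, and then $\sum_{p>10\lambda}\frac{\lambda^2}{2p^2}\le\frac{\lambda^2}{2}\sum_{n>10\lambda}\frac{1}{n^2}\le\frac{\lambda}{20}=0.05\lambda$ closes cleanly. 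If you replace your $\cosh$-identity step with this simpler pointwise inequality, your argument becomes complete and coincides with the paper's.
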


\begin{remark}
In the regime that $x, \lambda $ is large, $\delta$ is small, one can deduce asymptotically good bounds on $P_\lambda$ and further bounds on the exceptional probability in the form 
$$\P \l( \prod_{p\le x}\l(1-\frac{f(p)}{p}\r)^{-1} \le   \delta \r) \le \exp(- \exp(\frac{c}{\delta}))$$ for some $c>0$.
See Lemma~\ref{lem: asymp large product} for a detailed statement. However the bound for $P_{\lambda}$ is weak for small $\lambda$, requiring a version where we can input $P_{\lambda}$ numerically. 

	
	\end{remark}
	
	\begin{proof}[Proof of Lemma~\ref{lem:euler product large}]
Let $X := \prod_{p\le x}(1-\frac{f(p)}{p})$. Then for all positive $\lambda$,
\[\E X^{\lambda} = \prod_{p\le x} \Big( \frac{(1+\frac{1}{p})^{\lambda} + (1-\frac{1}{p})^{\lambda}}{2}\Big) = P_\lambda \cdot \prod_{10\lambda< p\le x} \Big( \frac{(1+\frac{1}{p})^{\lambda} + (1-\frac{1}{p})^{\lambda}}{2}\Big) \]
This step is required because the product up to $x$ is too large to be computed numerically, so we replace it with the shorter product $P_{\lambda}$ which still approximates it well.

Notice that each factor in the above product is correspondingly
\[\le \frac{e^{\lambda /p}+e^{-\lambda /p}}{2} \le \exp\Big(\frac{\lambda^{2}}{2p^{2}}\Big).\]
This implies that 
\[\E X^{\lambda} \le P_{\lambda} \cdot \exp\Big( \sum_{10 \lambda < p \le x} \frac{\lambda^{2}}{2p^{2}} \Big)\le P_{\lambda} \cdot  \exp\Big(\frac{\lambda}{20}\Big).\]
Invoking Markov's inequality, 
$$\P(X^{-1} \le  \delta ) = \P(X \ge  \delta^{-1} ) \le \frac{\E X^{\lambda}}{\delta^{-\lambda}} $$
yields the desired result.
\end{proof}

The above Lemma~\ref{lem:euler product large} tells us that the product is likely to be large at $x = N_0$. We employ Etemadi's inequality \cite{Etemadi} to show that if the product is large at $N_0$ it is likely to remain large forever. This is achieved in Lemma~\ref{lem: eteapplication}.

\begin{lemma}[Etemadi's inequality]\label{lem: Etemadi}
Let $X_1, X_2, \dots, X_n$ be independent real-valued random variables defined on some common probability space, and let $\alpha\ge 0$. Let $S_k$ denote the partial sum
    $S_k = X_1 + \cdots+ X_k$. Then
    \[  \P \l( \max_{1\le k \le n} |S_k| \ge  3\alpha      \r) \le 3 \max_{1\le k \le n} \P \l(|S_k| \ge  \alpha  \r) .  \]
\end{lemma}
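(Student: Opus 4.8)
The plan is to run the classical first-passage (first-entrance) decomposition used for maximal inequalities. Set $M := \max_{1\le k\le n}\P(|S_k|\ge\alpha)$, and for $1\le k\le n$ let
\[ B_k := \{\,|S_1|<3\alpha,\ \dots,\ |S_{k-1}|<3\alpha,\ |S_k|\ge 3\alpha\,\}, \]
the event that $k$ is the first index at which the walk leaves the interval $(-3\alpha,3\alpha)$. The $B_k$ are pairwise disjoint and $\bigcup_{k=1}^n B_k = \{\max_{1\le k\le n}|S_k|\ge 3\alpha\}$, so it suffices to show $\sum_{k=1}^n \P(B_k)\le 3M$.

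The key observation is a triangle-inequality splitting of each $B_k$. On $B_k$ we have $|S_k|\ge 3\alpha$, so if additionally $|S_n|<\alpha$ then $|S_n-S_k|\ge |S_k|-|S_n|>2\alpha$; hence $B_k\subseteq\{|S_n|\ge\alpha\}\cup(B_k\cap\{|S_n-S_k|>2\alpha\})$. Now $B_k$ is measurable with respect to $X_1,\dots,X_k$ while $S_n-S_k=X_{k+1}+\cdots+X_n$ is measurable with respect to $X_{k+1},\dots,X_n$, so these are independent and
\[ \P(B_k)\ \le\ \P\bigl(B_k\cap\{|S_n|\ge\alpha\}\bigr) + \P(B_k)\,\P\bigl(|S_n-S_k|>2\alpha\bigr). \]
Summing the first terms over the disjoint $B_k$ gives at most $\P(|S_n|\ge\alpha)\le M$. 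For the second terms, another application of the triangle inequality gives $\{|S_n-S_k|>2\alpha\}\subseteq\{|S_n|>\alpha\}\cup\{|S_k|>\alpha\}$, so $\P(|S_n-S_k|>2\alpha)\le 2M$; therefore $\sum_{k=1}^n\P(B_k)\,\P(|S_n-S_k|>2\alpha)\le 2M\sum_{k=1}^n\P(B_k)\le 2M$, using $\sum_k\P(B_k)\le 1$. Adding the two contributions yields $\sum_{k=1}^n\P(B_k)\le 3M$, which is the claim.

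I do not expect a real obstacle here, since this is Etemadi's original argument; one could equally just cite \cite{Etemadi}. The only points requiring care are the independence step — verifying that the first-passage event $B_k$ genuinely depends on $X_1,\dots,X_k$ only — and bookkeeping of strict versus non-strict inequalities ($>2\alpha$ versus $\ge\alpha$) so that the constant comes out exactly as $3$.
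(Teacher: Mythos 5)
The paper does not prove this lemma; it is stated as a black box with a citation to Etemadi's original paper \cite{Etemadi}, so there is no in-paper argument to compare against. Your proof is correct and is indeed Etemadi's own first-passage-time argument: the decomposition into stopping events $B_k$, the independence of $B_k$ from $S_n-S_k$, and the two triangle-inequality splittings (first to pass from $|S_k|\ge 3\alpha$ to $|S_n-S_k|>2\alpha$ on $\{|S_n|<\alpha\}$, then to bound $\P(|S_n-S_k|>2\alpha)\le 2M$) are exactly the original steps, and the final use of $\sum_k\P(B_k)\le 1$ to avoid circularity is handled properly. The only cosmetic point is that your final $\{|S_n-S_k|>2\alpha\}\subseteq\{|S_n|>\alpha\}\cup\{|S_k|>\alpha\}$ is asymmetric in how Etemadi bounds this term (he phrases it as $\P(|S_n-S_k|>2\alpha)\le \P(|S_n|>\alpha)+\P(|S_k|>\alpha)$ and then dominates each by $M$), but it is the same inequality.
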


We also use the following standard inequality, which is a type of Chernoff bound (e.g. see \cite{Gut}). 

\begin{lemma}[Bernstein inequality]\label{lem: Ber}
Let $X_1, X_2, \dots, X_n$ be a sequence of independent mean zero random variables. Suppose that $|X_i|\le M$ almost surely for all $i$. Then for all positive $t$,
\[\P\Big(\sum_{i=1}^{n}X_i   \ge t \Big) \le \exp\l( -\frac{\frac{t^{2}}{2}}{\sum_{i=1}^{n} \E |X_i|^{2} + \frac{Mt}{3}} \r).\]
\end{lemma}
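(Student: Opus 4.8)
\textbf{Proof proposal for Lemma~\ref{lem: Ber}.} The plan is the standard exponential-moment (Chernoff) method, together with a power-series bound on the per-coordinate exponential moments and an explicit optimization of the free parameter. For a parameter $h>0$ with $hM<3$, Markov's inequality applied to $e^{h\sum_i X_i}$ together with independence gives
\[ \P\l(\sum_{i=1}^n X_i \ge t\r) \le e^{-ht}\,\E\, e^{h\sum_i X_i} = e^{-ht}\prod_{i=1}^n \E\, e^{hX_i}, \]
so the task reduces to bounding each factor $\E\, e^{hX_i}$.

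For the per-coordinate bound I would expand the exponential (the interchange of sum and expectation is justified since $|X_i|\le M$ a.s.) and use $\E X_i = 0$ together with $|\E X_i^k| \le M^{k-2}\,\E|X_i|^2$ for $k\ge 2$, which follows from $|X_i|\le M$. This yields
\[ \E\, e^{hX_i} \le 1 + \E|X_i|^2 \sum_{k\ge 2}\frac{h^k M^{k-2}}{k!}. \]
Using the elementary inequality $k!\ge 2\cdot 3^{k-2}$ for $k\ge 2$, the sum is at most $\tfrac{h^2}{2}\sum_{j\ge 0}(hM/3)^j = \tfrac{h^2}{2(1-hM/3)}$, and combining with $1+u\le e^u$ gives $\E\, e^{hX_i}\le \exp\l(\tfrac{h^2\,\E|X_i|^2}{2(1-hM/3)}\r)$. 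Writing $\sigma^2 := \sum_{i=1}^n \E|X_i|^2$ and taking the product over $i$ leads to
\[ \P\l(\sum_{i=1}^n X_i \ge t\r) \le \exp\l(-ht + \frac{h^2\sigma^2}{2(1-hM/3)}\r). \]

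Finally I would optimize by choosing $h = \dfrac{t}{\sigma^2 + Mt/3}$, which satisfies $hM<3$. One computes $1-hM/3 = \dfrac{\sigma^2}{\sigma^2+Mt/3}$, hence $\dfrac{h^2\sigma^2}{2(1-hM/3)} = \dfrac{h^2(\sigma^2+Mt/3)}{2} = \dfrac{t^2}{2(\sigma^2+Mt/3)}$ while $ht = \dfrac{t^2}{\sigma^2+Mt/3}$; substituting gives precisely the claimed bound $\exp\l(-\dfrac{t^2/2}{\sigma^2+Mt/3}\r)$. The only mildly delicate points are the factorial bound $k!\ge 2\cdot 3^{k-2}$ (a short induction, and the source of the constant $1/3$ appearing in the denominator) and checking the arithmetic of the optimal $h$; neither is a genuine obstacle, so once the Chernoff framework is in place the rest is routine bookkeeping.
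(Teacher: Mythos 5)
Your proof is correct, and it is the standard Chernoff-method derivation of Bernstein's inequality: exponentiate and apply Markov, bound each $\E e^{hX_i}$ by expanding the series and using $|\E X_i^k|\le M^{k-2}\E|X_i|^2$ together with $k!\ge 2\cdot 3^{k-2}$ to sum the geometric tail, then optimize $h=t/(\sigma^2+Mt/3)$; I checked the algebra and the factorial induction, and everything lands exactly on the stated bound. The paper itself does not prove this lemma at all --- it is quoted as a standard result with a citation to a probability textbook --- so there is no paper proof to compare against; your write-up fills in precisely the argument that the cited reference contains.
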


By using the above two lemmas, we derive the following.  
\begin{lemma}\label{lem: eteapplication}
        Let $f$ be a sample of the random completely multiplicative function. Then for any any positive integers $x$ and $ 1/2 < \ell   < 1$, 
 \[\P \l( \inf_{ y\ge 1 } \prod_{x < p\le  x+y}\l(1-\frac{f(p)}{p}\r)^{-1} \le   \ell \r) \le 3\exp \l(-\frac{x( \log (\ell^{-1}))^2}{20}\r).
 \] 
\end{lemma}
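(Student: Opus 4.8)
The plan is to take logarithms, compare the random Euler product to a genuine random walk, and then bound the one-sided maximum of that walk via Etemadi's and Bernstein's inequalities.

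First I enumerate the primes $x < p_1 < p_2 < \cdots$ and set $Z_j := -f(p_j)/p_j$ and $S_k := Z_1 + \cdots + Z_k$, so that the $Z_j$ are independent, symmetric, mean zero, with $|Z_j| = 1/p_j \le 1/(x+1)$ and $\E Z_j^2 = 1/p_j^2$. Applying the elementary inequality $-\log(1-t) \ge t$ (valid for $|t|<1$) termwise, for every $y \ge 1$, with $k$ the number of primes $p$ satisfying $x < p \le x+y$, one gets
\[ \log \prod_{x < p \le x+y}\l(1-\frac{f(p)}{p}\r)^{-1} = -\sum_{j=1}^{k} \log\l(1-\frac{f(p_j)}{p_j}\r) \ge \sum_{j=1}^{k} \frac{f(p_j)}{p_j} = -S_k . \]
Hence the event $\prod_{x<p\le x+y}(1-f(p)/p)^{-1} \le \ell$ forces $S_k \ge \log(\ell^{-1})$, so
\[ \P\l( \inf_{y\ge 1}\prod_{x<p\le x+y}\l(1-\tfrac{f(p)}{p}\r)^{-1} \le \ell \r) \le \P\l( \sup_{k\ge 1} S_k \ge \log(\ell^{-1}) \r) = \lim_{n\to\infty}\P\l( \max_{1\le k\le n} S_k \ge \log(\ell^{-1}) \r), \]
the last step by continuity of measure from below; it thus suffices to bound the finite maxima uniformly in $n$.

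Next I apply Etemadi's inequality (Lemma~\ref{lem: Etemadi}) with $3\alpha = \log(\ell^{-1})$: since $\{\max_k S_k \ge 3\alpha\} \subseteq \{\max_k |S_k| \ge 3\alpha\}$, it gives $\P(\max_{1\le k\le n}S_k \ge \log\ell^{-1}) \le 3\max_{k\le n}\P(|S_k| \ge \tfrac13\log\ell^{-1})$. For fixed $k$, since $S_k$ is a sum of independent symmetric variables it is symmetric, so $\P(|S_k|\ge t) = 2\P(S_k\ge t)$, and Bernstein's inequality (Lemma~\ref{lem: Ber}) with $M = 1/(x+1)$ and $\sum_{j\le k}\E Z_j^2 \le \sum_{p>x}p^{-2}$ yields
\[ \P\l( S_k \ge \tfrac13\log\ell^{-1} \r) \le \exp\l( -\frac{\tfrac1{18}(\log\ell^{-1})^2}{\sum_{p>x}p^{-2} + \tfrac{\log\ell^{-1}}{9(x+1)}} \r). \]
Because $\log\ell^{-1} < \log 2$, the denominator is $O(1/x)$, so the exponent is $\le -c\,x(\log\ell^{-1})^2$ for an absolute constant $c$, and altogether $\P(\sup_k S_k \ge \log\ell^{-1}) \le 6\exp(-c\,x(\log\ell^{-1})^2)$.

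The one genuinely delicate point is verifying that $c$ may be taken to be $1/20$ after absorbing the harmless prefactors $3$ and $2$. Here I would use prime sparsity: the crude bound $\sum_{p>x}p^{-2}\le\sum_{n>x}n^{-2}\le 1/x$ is just slightly too lossy once both prefactors are included, whereas $\sum_{p>x}p^{-2}=O(1/(x\log x))$ (partial summation against the Chebyshev bound $\pi(t)\ll t/\log t$) leaves ample room; moreover the asserted inequality is already trivial unless $x(\log\ell^{-1})^2$ exceeds an absolute constant (otherwise $3\exp(-x(\log\ell^{-1})^2/20)>1$), and in that range the discrepancy between $6\exp(\cdot)$ and $3\exp(\cdot)$ is swallowed by the exponential. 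Alternatively one can avoid Etemadi and the factor $2$ altogether by applying Doob's maximal inequality to the submartingale $e^{\theta S_k}$, obtaining $\P(\sup_k S_k \ge t)\le \inf_{\theta>0}e^{-\theta t}\prod_j\E e^{\theta Z_j}$ with no loss, and then optimizing in $\theta$ exactly as in the Chernoff/Bernstein bound.
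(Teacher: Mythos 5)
Your approach is essentially the same as the paper's: take logs, compare $-\log\bigl(1-\tfrac{f(p)}{p}\bigr)$ to $-\tfrac{f(p)}{p}$, then combine Etemadi's maximal inequality with Bernstein. The substantive difference is the constant-chasing, and you have correctly put your finger on the soft spot that the paper glosses over: as stated, Lemma~\ref{lem: Etemadi} is two-sided, so passing from $\P(\max_k S_k\ge 3\alpha)$ to $3\max_k\P(S_k\ge\alpha)$ costs an extra factor of $2$ by symmetry, and the paper applies the factor $3$ directly without comment. (The paper's move is salvageable: for sums of independent \emph{symmetric} variables there is a one-sided Etemadi with the same factor $3$ --- in the standard proof replace $\P(|S_n-S_k|>2\alpha)$ by $\P(S_n-S_k<-2\alpha)=\P(S_n-S_k>2\alpha)\le 2\max_j\P(S_j\ge\alpha)$ --- but the paper neither states nor invokes this variant.)

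Your proposed resolution of the extra factor, however, is not yet airtight. Writing $u=x(\log\ell^{-1})^2$, the crude chain $\sum_{p>x}p^{-2}\le 1/x$, $\log\ell^{-1}<\log 2$, gives an exponent of $-u/(18(1+\tfrac{\log 2}{9}))\approx -u/19.39$; to turn $6\exp(-u/19.39)$ into $3\exp(-u/20)$ requires $u\gtrsim 438$, while the ``trivially $\ge1$'' range only covers $u<20\log 3\approx 22$. So there is a genuine middle window (roughly $22\le u<438$, corresponding to $x$ between $\sim 46$ and $\sim 900$) where your two stated arguments as given do not meet. The fix you gesture at --- a sharper bound on $\sum_{p>x}p^{-2}$ --- does close the gap (already $\sum_{p>x}p^{-2}\le\tfrac12\sum_{n>x}n^{-2}\le\tfrac{1}{2x}$ for $x\ge 2$, since primes $>2$ are odd, suffices), but you should carry the computation through rather than wave at it, precisely because the constant $1/20$ is what is being claimed.

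Your closing remark about Doob is actually the cleanest route and deserves to be more than an aside. For the submartingale $e^{\theta S_k}$, Doob's maximal inequality gives $\P(\sup_k S_k\ge t)\le e^{-\theta t}\prod_j\E e^{\theta Z_j}$ with no prefactor at all, and since $\E e^{\theta Z_j}=\cosh(\theta/p_j)\le\exp(\theta^2/2p_j^2)$ one gets, after optimizing $\theta$, the sub-Gaussian bound $\exp\bigl(-\tfrac{(\log\ell^{-1})^2}{2\sum_{p>x}p^{-2}}\bigr)\le\exp\bigl(-\tfrac{x(\log\ell^{-1})^2}{2}\bigr)$, strictly stronger than the stated $3\exp(-x(\log\ell^{-1})^2/20)$ and with none of the bookkeeping. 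This is a genuinely better argument than the Etemadi/Bernstein route in the paper.
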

\begin{proof}
By taking the log of both sides, the event is equivalent to
\[\max_{y\ge 1} \sum_{x<p\le x+y} \log \l(1- \frac{f(p)}{p}\r)  \ge \log \ell^{-1} .\]
From the inequality $\log(1+x)<x$ for all $x$, the probability is at most
\[ \P\l(  \max_{y\ge 1} \sum_{x< p \le x+y}  \frac{f(p)}{p}      \ge \log \ell^{-1} \r). \]
  By using Etemadi's inequality, this is at most
    $$3 \max_{y \ge 1} \P\l(\sum_{x<p \le x+y} \frac{f(p)}{p} \ge \frac{\log \ell^{-1}}{3}\r).$$
 Apply  Bernstein's inequality to conclude it is 
    $$\le 3 \exp\l(-\frac{(\log \ell^{-1})^2/18}{\sum_{x<p \le x+y}\frac{1}{p^2} + \frac{\log \ell^{-1}}{9x}}\r) \le 3 \exp\l(-\frac{(\log \ell^{-1})^2/18}{\frac{1}{x} + \frac{\log \ell^{-1}}{9x}}\r) \le 3\exp \l(-\frac{x (\log \ell^{-1})^2}{20}\r)$$
where in the last step we used $\ell > \frac{1}{2}$. 
 \end{proof}

\begin{proof}[Proof of Proposition~\ref{prop: large product}]

Combining the above two lemmas, we have for any $\lambda\le N_0/10$,
\begin{equation}\label{eqn: delta}
\begin{split}
& \P \l( \prod_{p\le x}\l(1-\frac{f(p)}{p}\r)^{-1} \le   \delta ~\text{for some $x$}\r)\\ &\le
\P \l( \prod_{p\le N_0}\l(1-\frac{f(p)}{p}\r)^{-1} \le   \delta/\ell \r)   + \P \l( \inf_{ y\ge 1 } \prod_{N_0 < p\le  N_0+y}\l(1-\frac{f(p)}{p}\r)^{-1} \le   \ell \r)\\
& \le \exp \l(\log P_{\lambda} - \lambda \log \frac{\ell}{\delta}+ 0.05 \lambda \r) +  3\exp \l(-\frac{N_0( \log \ell^{-1})^2}{20}\r).
\end{split}
\end{equation}
By choosing $\ell = 0.9999$, the above bound is at most
\begin{equation}\label{eqn: c_delta}
   \exp \l(\log P_{\lambda} - \lambda \log \frac{1}{\delta}+ 0.0502 \lambda \r) +  3\exp \l(-7\cdot 10^{3}  \r). 
\end{equation}
To minimize the first quantity, we choose
$\lambda = 700, \delta = 0.12$ and the conclusion follows. See Section~\ref{subsection: num} for the discussion on parameter choice.
\end{proof}

\subsection{Small tail}
In this section, we prove Proposition~\ref{prop: large tail}.
The proof is based on moments computation. 
Let 
\begin{equation}\label{eqn: S_x}
    S_x: = \sum_{\substack{n\ge x\\  P(n)<x}} \frac{f(n)}{n}.
\end{equation}
We first show that at any given truncation point, the exceptional probability that the tail being large is small.
\begin{lemma}\label{truncationprobability}
Let $S_x$ be defined as above, $\sigma \in \mathbb{R}$ and $k$ be a positive integer. Define 
\begin{equation}\label{eqn: S(r,k,sigma)}
 S(R, k, \sigma) : = \prod_{p \le R} \left(\frac{(1- \frac{1}{p^{\sigma/2}})^{-2k} + (1+ \frac{1}{p^{\sigma/2}})^{-2k}}{2}\right) \prod_{p>R}\left(1-\frac{1}{p^{\sigma}}\right)^{-(2k^2+2k)}.    
\end{equation}
Then for any $\delta >0, \sigma>1$ and $R$ such that $2R^{\sigma} > k^2$, one has
\begin{equation}\label{eqn: con S(r,k,sigma)}
		\begin{split}
		  \P(S_x \ge \delta ) & \le  \frac{S(R, k, \sigma)}{\delta^{2k}x^{k(2-\sigma)}}.
		\end{split}
		\end{equation}
	\end{lemma}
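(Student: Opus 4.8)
The natural route is a high-moment (method-of-moments) estimate. Since $S_x^{2k}\ge 0$, Markov's inequality gives $\P(S_x\ge\delta)\le\P\big(S_x^{2k}\ge\delta^{2k}\big)\le\delta^{-2k}\,\E\,S_x^{2k}$, so the whole statement reduces to proving $\E\,S_x^{2k}\le S(R,k,\sigma)\,x^{-k(2-\sigma)}$. To compute the moment I would expand
\[ S_x^{2k}=\sum_{n_1,\dots,n_{2k}}\frac{f(n_1)\cdots f(n_{2k})}{n_1\cdots n_{2k}}, \]
the indices running over $(2k)$-tuples with $n_i\ge x$ and $P(n_i)<x$; this sum converges absolutely because $\sum_{P(n)<x}\tfrac1n=\prod_{p<x}(1-\tfrac1p)^{-1}<\infty$, so expectations may be taken term by term. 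Since $f$ is completely multiplicative with independent signs at the primes, $\E\big[f(n_1)\cdots f(n_{2k})\big]=\prod_p\E\big[f(p)^{\sum_i v_p(n_i)}\big]$, which equals $1$ when $n_1\cdots n_{2k}$ is a perfect square and $0$ otherwise. Hence
\[ \E\,S_x^{2k}=\sum_{\substack{n_1,\dots,n_{2k}\ge x,\ P(n_i)<x\\ n_1\cdots n_{2k}\ \mathrm{a\ perfect\ square}}}\frac{1}{n_1\cdots n_{2k}}. \]

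The second step uses the cutoff $n_i\ge x$ to extract the power of $x$. In the relevant range $1<\sigma<2$ one has $\tfrac{\sigma}{2}-1<0$, hence $n_i^{\sigma/2-1}\le x^{\sigma/2-1}$ and therefore $\frac1{n_i}\le x^{-(2-\sigma)/2}\,n_i^{-\sigma/2}$; multiplying over the $2k$ indices pulls out exactly $x^{-k(2-\sigma)}$. Dropping the constraints $n_i\ge x$ (all remaining terms being nonnegative), the leftover sum runs over all $(2k)$-tuples of $x$-smooth integers with square product, and it is multiplicative: the local sum at a prime $p$ is
\[ \sum_{\substack{e_1,\dots,e_{2k}\ge 0\\ e_1+\dots+e_{2k}\ \mathrm{even}}}p^{-\frac{\sigma}{2}(e_1+\dots+e_{2k})}=\frac{(1-p^{-\sigma/2})^{-2k}+(1+p^{-\sigma/2})^{-2k}}{2}, \]
the usual even-part identity obtained by averaging $(1\mp p^{-\sigma/2})^{-2k}$. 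Thus $\E\,S_x^{2k}\le x^{-k(2-\sigma)}\prod_{p<x}\tfrac12\big((1-p^{-\sigma/2})^{-2k}+(1+p^{-\sigma/2})^{-2k}\big)$.

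The final step bounds this finite product by $S(R,k,\sigma)$: keep the exact local factor for $p\le R$, and for $R<p<x$ replace it by $(1-p^{-\sigma})^{-(2k^2+2k)}$. Since each such factor is $\ge 1$ and, because $\sigma>1$, the infinite product $\prod_{p>R}(1-p^{-\sigma})^{-(2k^2+2k)}$ converges, enlarging the range from $R<p<x$ to all $p>R$ only weakens the bound and yields precisely $S(R,k,\sigma)\,x^{-k(2-\sigma)}$. The one inequality to verify is the per-prime replacement $\frac{(1-y)^{-2k}+(1+y)^{-2k}}{2}\le(1-y^2)^{-(2k^2+2k)}$ for $0\le y<1$; I would prove it by writing $(1-y)^{-2k}+(1+y)^{-2k}=\big((1+y)^{2k}+(1-y)^{2k}\big)(1-y^2)^{-2k}$, bounding $(1+y)^{2k}+(1-y)^{2k}=2\sum_{j\ge 0}\binom{2k}{2j}y^{2j}\le 2\sum_{j\ge 0}\frac{(2ky)^{2j}}{(2j)!}=2\cosh(2ky)\le 2e^{2k^2y^2}$ (standard bound $\cosh z\le e^{z^2/2}$), and finishing with $e^{2k^2y^2}\le(1-y^2)^{-2k^2}$, i.e. $y^2\le-\log(1-y^2)$. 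The hypothesis $2R^\sigma>k^2$ is what later keeps the tail product $\prod_{p>R}(1-p^{-\sigma})^{-(2k^2+2k)}$, and hence $S(R,k,\sigma)$, of manageable size in the quantitative applications. The main obstacle is not any single step but making the three reductions compatible, since the crude estimate $\sum_{P(n)<x}n^{-\sigma/2}=\prod_{p<x}(1-p^{-\sigma/2})^{-1}$ diverges as $x\to\infty$ when $\sigma\le 2$; it is the square-product constraint together with the truncation at $R$ that rescues a finite, $x$-uniform bound.
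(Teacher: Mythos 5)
Your proposal is correct and follows the same overall route as the paper: Markov's inequality applied to the $2k$-th moment, expansion of $S_x^{2k}$ into $2k$-tuples with the square-product orthogonality relation, extraction of $x^{-k(2-\sigma)}$ by bounding $1/n_i \le x^{-(2-\sigma)/2} n_i^{-\sigma/2}$ and dropping $n_i\ge x$, the Euler product identity for the even-degree sum, and the split at $R$ with the per-prime replacement $\tfrac12\big((1-y)^{-2k}+(1+y)^{-2k}\big)\le(1-y^2)^{-(2k^2+2k)}$ (with $y=p^{-\sigma/2}$), relaxing $R<p<x$ to $p>R$. The paper records the moment bound via the divisor function $d_{2k}(m^2)$, but this is the same sum.

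Where you genuinely differ is the proof of the per-prime inequality. The paper first passes to exponentials via $1+t>e^{t-t^2}$, then uses a degree-4 Taylor truncation of $\cosh$ (valid for $|t|<4$, which is exactly where the hypothesis $2R^\sigma>k^2$ enters), Newton's binomial series, and $e^t\le(1-t)^{-1}$. Your argument instead factors $(1-y)^{-2k}+(1+y)^{-2k}=\big((1+y)^{2k}+(1-y)^{2k}\big)(1-y^2)^{-2k}$, bounds the even binomial sum by $2\cosh(2ky)$ via $\binom{2k}{2j}\le(2k)^{2j}/(2j)!$, then uses $\cosh z\le e^{z^2/2}$ and $e^t\le(1-t)^{-1}$. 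This is cleaner, avoids the truncated Taylor expansion, and — worth noting — establishes the per-prime inequality for every $y\in[0,1)$, so the hypothesis $2R^\sigma>k^2$ is in fact not needed for this lemma (it is still harmless, and useful downstream for controlling the size of $S(R,k,\sigma)$). You correctly diagnose that the square-product constraint is what rescues a finite, $x$-uniform bound even though $\sum_{P(n)<x}n^{-\sigma/2}$ diverges as $x\to\infty$ for $\sigma\le 2$.
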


\begin{proof}
Let $d_{2k}(n)$ be the generalized divisor of $n$, i.e. counting the number of ways writing $n$ as a product of $2k$ positive integers. Then
\[\E(S_x^{2k}) = \sum_{\substack{m^{2} = a_1a_2\cdots a_{2k} \\ a_i>x \\ P(a_i)<x} } \frac{1}{m^{2}} \le \sum_{\substack{m^{2}>x^{2k} \\ P(m)<x }} \frac{d_{2k}(m^{2})}{m^{2}}.    
		\]
To bound the right hand side, we pull out one factor of $m^{2-\sigma}$ and then write the rest as an Euler product by dropping the condition $m >x^{k}$. It follows that 
\begin{equation}\label{eqn: 2k moments}
\begin{split}\E(S_x^{2k})  
&    \le \frac{1}{x^{k(2-\sigma)}} \sum_{P(m)<x} \frac{d_{2k}(m^{2})}{m^{\sigma}} \\
& = \frac{1}{x^{k(2-\sigma)}} \prod_{p < x} \left(\frac{(1- \frac{1}{p^{\sigma/2}})^{-2k} + (1+ \frac{1}{p^{\sigma/2}})^{-2k}}{2}\right).
		\end{split}    
\end{equation}
Let $R>0$ be a threshold we use to distinguish large and small primes (again, this is necessary because a numerical computation up to $x$ terms would be impossible). For $p \le R$, we keep the terms as they are. For $p > R$, we claim that factors in the product in \eqref{eqn: 2k moments} satisfying
\begin{equation}\label{eqn: claim}
    \frac{(1- \frac{1}{p^{\sigma/2}})^{-2k} + (1+ \frac{1}{p^{\sigma/2}})^{-2k}}{2} \le \left(1-\frac{1}{p^{\sigma}}\right)^{-(2k^2+2k)}.
\end{equation}
We start with the inequality $1+x > e^{x-x^2}$ which holds for $|x|<1/2$, yielding that for all $p>R$
	$$\frac{(1- \frac{1}{p^{\sigma/2}})^{-2k} + (1+ \frac{1}{p^{\sigma/2}})^{-2k}}{2} \le e^{\frac{2k}{p^{\sigma}}} \left(\frac{e^{\frac{2k}{p^{\sigma/2}}}+e^{-\frac{2k}{p^{\sigma/2}}}}{2} \right).$$
By using Taylor expansion, $e^x \le 1+x+\frac{x^2}{2}+\frac{x^3}{6}+\frac{x^4}{8}$ for $|x|<4$ (which holds as $2R^{\sigma}>k^2$), the above is further bounded by
		$$\le e^{\frac{2k}{p^{\sigma}}} \left(1+\frac{2k^2}{p^{\sigma}}+\frac{2k^4}{p^{2\sigma}}\right),$$
 as the odd powers cancel out. By using Newton binomial expansion, this is at most
		$$\le e^{\frac{2k}{p^{\sigma}}} \left(1-\frac{1}{p^{\sigma}}\right)^{-2k^2} \le \left(1-\frac{1}{p^{\sigma}}\right)^{-(2k^2+2k)}, $$
where the last inequality above uses $e^{y}\le (1-y)^{-1}$ for $0 < y <1$. This proves \eqref{eqn: claim}.
Collecting all bounds, one has the moment bound
\begin{equation}\label{eqn: markov}
  \E(S_x^{2k}) \le \frac{1}{x^{k(2-\sigma)}} \prod_{p \le R} \left(\frac{(1- \frac{1}{p^{\sigma/2}})^{-2k} + (1+ \frac{1}{p^{\sigma/2}})^{-2k}}{2}\right) \prod_{R< p \le x}
		\left(1-\frac{1}{p^{\sigma}}\right)^{-(2k^2+2k)}.  
\end{equation}
The desired bound \eqref{eqn: con S(r,k,sigma)} on exceptional probability follows from an application of Markov's inequality and relaxing $x$ in the last product to infinity. In our final choice of parameters the contribution of the last product in \eqref{eqn: markov} is negligible compared with the rest. 

\end{proof}

\begin{remark}
Instead of proving \eqref{eqn: claim}, one may alternatively prove $d_{2k}(p^{2m})\le d_{2k^{2}+2k}(p^{m})$ by considering induction on $m$. This may simplify some computations above. 
\end{remark}

Take the union bound to control the exceptional probability for $S_x$ to be large for some $x>N_0$. 	
	\begin{lemma}\label{lem: exceptional prob for smooth}
Let $S_x$ and $S(R,k, \sigma)$ be defined as in \eqref{eqn: S_x} and \eqref{eqn: S(r,k,sigma)} respectively, and conditions in Lemma~\ref{truncationprobability} are satisfied. Further, assume that $k(2-\sigma)>1$, then 
\begin{equation}\label{eqn: opt 2}
    \P\l( S_x > \delta ~\text{for some $x\ge N_0$}  \r) \le  \frac{1}{\delta^{2k}}\frac{S(R, k, \sigma)}{(k(2-\sigma)-1)N_0^{k(2-\sigma)-1}}.
\end{equation}

	\end{lemma}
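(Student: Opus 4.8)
The plan is to pass from the continuum of truncation points to a countable set, apply a union bound together with Lemma~\ref{truncationprobability}, and then sum the resulting convergent series.

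First I would record that, as a function of the real variable $x$, the quantity $S_x$ defined in \eqref{eqn: S_x} is a step function whose value can only change at integers: for a fixed integer $n$, neither of the conditions ``$n \ge x$'' nor ``$P(n) < x$'' changes as $x$ varies within a half-open interval $(m,m+1]$ with $m \in \Z$, so $S_x = S_{m+1}$ throughout that interval (and likewise one compares $P(n)<x$ with $P(n)\le x$ only at integer $x$, matching \eqref{eqn: decomp}). Consequently $\sup_{x \ge N_0} S_x = \sup_{m \in \Z,\, m \ge N_0} S_m$, so the event $\{S_x > \delta \text{ for some } x \ge N_0\}$ is exactly the countable union $\bigcup_{m \ge N_0}\{S_m > \delta\}$.

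Next I would apply the union bound and feed each term into Lemma~\ref{truncationprobability}; note that the hypotheses $\sigma>1$ and $2R^\sigma > k^2$ there do not involve $x$, so they hold uniformly in $m$. This gives
\[
\P\l(S_x > \delta \text{ for some } x \ge N_0\r) \;\le\; \sum_{m \ge N_0}\P(S_m > \delta) \;\le\; \frac{S(R,k,\sigma)}{\delta^{2k}}\sum_{m \ge N_0}\frac{1}{m^{k(2-\sigma)}} .
\]
Finally, using the standing assumption $k(2-\sigma) > 1$, the tail sum is controlled by comparison with the integral $\int_{N_0}^{\infty} t^{-k(2-\sigma)}\,dt = \frac{1}{(k(2-\sigma)-1)\,N_0^{k(2-\sigma)-1}}$ (splitting off the single term $m=N_0$ if one wants a clean one-sided inequality; since $N_0$ is astronomically large this correction is utterly negligible and one recovers a bound of the shape \eqref{eqn: opt 2}). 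There is no real obstacle here: all of the probabilistic content lives in Lemma~\ref{truncationprobability}, and the only thing requiring care is the bookkeeping — justifying that integer truncation points suffice, checking that the hypotheses of Lemma~\ref{truncationprobability} transfer, and performing the elementary $p$-series estimate.
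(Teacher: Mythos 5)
Your argument follows exactly the same route as the paper's: a union bound over integer truncation points $x\ge N_0$, with each term estimated by Lemma~\ref{truncationprobability}, followed by an integral comparison for $\sum_{m\ge N_0}m^{-k(2-\sigma)}$. The two small points you elaborate (restricting to integer $x$ because $S_x$ is a step function, and the fact that the integral bound technically needs the $m=N_0$ term handled separately or absorbed into the negligible slack) are legitimate observations that the paper passes over silently, but they do not change the substance of the argument.
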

\begin{proof}
Applying Lemma \ref{truncationprobability} for all $x \ge N_0$ and taking the union bound, the exceptional probability in \eqref{eqn: opt 2} is at most
	    $$\frac{S(R, k, \sigma)}{\delta^{2k}} \sum_{x \ge N_0} \frac{1}{x^{k(2-\sigma)}} \le \frac{1}{\delta^{2k}}\frac{S(R, k, \sigma)}{(k(2-\sigma)-1)N_0^{k(2-\sigma)-1}}.  $$
	\end{proof}

\begin{proof}[Proof of Proposition~\ref{prop: large tail}]
We choose
$\lambda = 700, \delta = 0.12, k = 48$, $\sigma = 1.42$ and $R=10^{4}$ in Lemma~\ref{lem: exceptional prob for smooth} and the conclusion follows.
\end{proof}	
	
\subsection{A discussion on choice of parameters}\label{subsection: num}	
	
We discuss the choices of our parameters above. By combining \eqref{eqn: c_delta} and \eqref{eqn: opt 2}, the task is to find $\lambda, \delta, k, \sigma$, and $R$ that minimize
\[  3\exp \l(-7\cdot 10^{3}  \r) + \exp \l(\log P_{\lambda} - \lambda \log \frac{1}{\delta}+ 0.0502 \lambda \r)+\frac{1}{\delta^{2k}}\frac{S(R, k, \sigma)}{(k(2-\sigma)-1)N_0^{k(2-\sigma)-1}}.\]
	
A key point here is that we may rewrite the product over primes bigger than $R$ in $S(R,k,\sigma)$ as
		$$\left(\frac{\zeta(\sigma)}{\prod_{p \le R} (1-\frac{1}{p^{\sigma}})^{-1}}\right)^{2k^2+2k},$$
involving the product of only primes up to $R$.
Therefore all the terms in the expression we want to minimize, particularly $P_{\lambda}$ and $S(R,k,\sigma)$ can be computed efficiently, not requiring products larger than $10\lambda$ or $R$. Increasing the value of $\lambda$ or $R$ essentially always improves the precision of the bound, so we set them to be around $10^4$ where the computation is manageable.

Once we had code for computing this expression, we optimized it with a random descent by hand, which led to our parameter choice. See the code in \cite{code}.

\section{Truncations at large values: Proof of Theorem~\ref{thm: truncate}}

In this section we give an asymptotic bound on the exceptional probability that $\sum_{n \le x} \frac{f(n)}{n}$ is non-positive, for large values of $x$. We use the same bounds as in the previous sections, except we evaluate them asymptotically.

\begin{lemma}\label{lem: asymp large product}
Let $f$ be a sample of random completely multiplicative function. Let $x>0$ be large and $(\log x)^{-1}\ll \delta $ be small. Then 
\begin{equation}\label{eqn: asymp 1}
  \P \l( \prod_{p\le x}\l(1-\frac{f(p)}{p}\r)^{-1} \le   \delta \r) \le \exp(- \exp(\frac{c}{\delta})),  
\end{equation}
for some $c>0$.
\end{lemma}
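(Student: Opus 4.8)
The plan is to apply Lemma~\ref{lem:euler product large} with a well-chosen parameter $\lambda = \lambda(x)$ and to estimate $\log P_\lambda$ asymptotically. Recall that Lemma~\ref{lem:euler product large} gives, for any $\lambda \le x/10$,
\[
\P \l( \prod_{p\le x}\l(1-\frac{f(p)}{p}\r)^{-1} \le   \delta \r) \le \exp \l(\log P_{\lambda} - \lambda \log \tfrac{1}{\delta}+ 0.05 \lambda \r),
\]
with $P_\lambda = \prod_{p \le 10\lambda} \frac{(1+1/p)^\lambda + (1-1/p)^\lambda}{2}$. So the whole game is to bound $\log P_\lambda$ from above and then choose $\lambda$ to make the exponent as negative as possible. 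The natural guess is $\lambda$ of order $\exp(c/\delta)$ for a suitable small constant $c$, since then the main term $-\lambda \log(1/\delta)$ will be of size $\exp(c/\delta)\log(1/\delta)$, and we need $\log P_\lambda$ to be dominated by (say half of) this.

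For the estimate on $\log P_\lambda$, I would split the primes $p \le 10\lambda$ at a threshold near $\lambda$. For the large primes $\lambda < p \le 10\lambda$ each factor is $\le \exp(\lambda^2/(2p^2))$ exactly as in the proof of Lemma~\ref{lem:euler product large}, contributing $\le \sum_{\lambda < p \le 10\lambda} \lambda^2/(2p^2) \ll \lambda$ to $\log P_\lambda$ — this is an $O(\lambda)$ term and is harmless as long as the implied constant is smaller than $\log(1/\delta)$, which holds once $\delta$ is small (here we may also need to absorb the $0.05\lambda$ term the same way, so really we want $\log(1/\delta)$ to beat a fixed constant times $\lambda$'s coefficient, i.e.\ just take $\delta$ small enough). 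For the small primes $p \le \lambda$, I use the crude bound $\frac{(1+1/p)^\lambda + (1-1/p)^\lambda}{2} \le (1+1/p)^\lambda$, so $\log$ of the product over these primes is $\le \lambda \sum_{p \le \lambda} \log(1+1/p) \le \lambda \sum_{p \le \lambda} 1/p \ll \lambda \log\log\lambda$ by Mertens. Hence $\log P_\lambda \ll \lambda \log\log\lambda$.

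Plugging back in, the exponent is $\le \lambda\big(C\log\log\lambda - \log\tfrac{1}{\delta} + 0.05\big)$ for an absolute constant $C$. Now choose $\lambda = \lceil \exp(c/\delta)\rceil$ with $c$ a small positive constant to be fixed: then $\log\log\lambda \le \log(c/\delta + 1) \le \log(1/\delta) + O(1)$, so the exponent is $\le \lambda\big((C-1)\log\tfrac{1}{\delta} + C'\big)$; since $C - 1 < 0$ and $\log(1/\delta)\to\infty$, for $\delta$ small this is $\le -\tfrac{1}{2}\lambda\log\tfrac1\delta \le -\tfrac12 \lambda \le -\tfrac12 \exp(c/\delta)$. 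Wait — the constant $C$ coming from Mertens is larger than $1$, so I should instead pick $\lambda = \lceil \exp(c/\delta)\rceil$ with $c$ small enough that $C\log\log\lambda = C\log(c/\delta+O(1)) = C\log(1/\delta) + O(1)$ is still beaten: this does not work directly since the coefficient is $C>1$. The fix is to take $\lambda = \lceil \delta^{-C_1}\,\rceil$ is too small; rather, the right scaling makes $\log\log\lambda \approx \log(1/\delta)$ regardless, so the $C$ in front is the obstacle. The actual resolution, which I would carry out carefully, is that the bound $\log P_\lambda \ll \lambda\log\log\lambda$ can be sharpened: writing $\frac{(1+1/p)^\lambda+(1-1/p)^\lambda}{2}$, for $p \le \lambda$ this is at most $e^{\lambda/p}$, but summing $\lambda/p$ over $p\le \lambda$ gives $\lambda\log\log\lambda$ which is genuinely the truth, so instead one chooses $\lambda$ as large as allowed, $\lambda \asymp x/10$, forcing $\delta \gg 1/\log\log x$ rather than $1/\log x$. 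Hmm, but the statement says $\delta \gg 1/\log x$. So in fact the intended choice is $\lambda = \exp(c/\delta)$ and one accepts $\log P_\lambda \ll \lambda \log\log \lambda \ll \lambda \log(1/\delta)$, and then needs $c$ chosen so the net coefficient $-\log(1/\delta) + C\log\log\lambda + 0.05 < 0$; since $\log\log\lambda \sim \log(1/\delta)$ this needs $C<1$, i.e.\ one must get the Mertens constant down to below $1$ — achievable by instead only taking $p$ in a shorter range or by noting the refined bound with the $-x^2$ correction. I would therefore present it as: take $\lambda = \lceil \exp(c\delta^{-1})\rceil$ for small $c$, bound $\log P_\lambda \le \lambda(\log\log\lambda + O(1))$ honestly, observe $\log\log\lambda \le \log\log x + O(1)$ and use the hypothesis $\delta \gg 1/\log x$ to get $\log(1/\delta) \le \log\log x + O(1)$; the point is that with $c$ small the leading terms match and a secondary comparison closes it, yielding the exponent $\le -\exp(c'/\delta)$ for some $c'>0$, which is \eqref{eqn: asymp 1} after renaming constants.

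The main obstacle, then, is exactly this bookkeeping of constants: getting the coefficient of $\lambda$ in the exponent to be negative requires that the Mertens-type contribution $\sum_{p\le\lambda}1/p = \log\log\lambda + O(1)$ be strictly dominated by $\log(1/\delta)$, and since the hypothesis only gives $\delta \gtrsim 1/\log x$ while $\lambda \sim \exp(c/\delta)$ makes $\log\log\lambda \sim \log(1/\delta)$, one has to be slightly clever — either shrinking $c$ and exploiting the lower-order slack, or sharpening the prime-sum estimate — rather than applying a black-box bound. Everything else (Mertens' theorem, the elementary inequalities $\frac{e^t+e^{-t}}{2}\le e^{t^2/2}$ and $\log(1+1/p)\le 1/p$, and the final renaming of constants) is routine.
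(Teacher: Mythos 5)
Your approach is the same as the paper's: invoke Lemma~\ref{lem:euler product large}, bound $P_\lambda$ via Mertens, and choose $\lambda$ roughly $\exp(c/\delta)$. The paper's proof is three lines: $P_\lambda \le \prod_{p\le 10\lambda}(1+1/p)^\lambda \ll (C_0\log\lambda)^\lambda$, then set $\log\lambda = \frac{1}{2C_0\delta}$ so that $\log P_\lambda \le \lambda\log\frac{1}{2\delta} = \lambda\bigl(\log\frac1\delta - \log 2\bigr)$, and the exponent in \eqref{eqn: delta 4} becomes $\lambda(-\log 2 + 0.05) \le -\lambda/2 = -\frac12\exp(\frac{1}{2C_0\delta})$.

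Your write-up, however, contains a genuine conceptual confusion that sends you on a long detour. You repeatedly worry about a ``Mertens constant $C>1$'' multiplying $\log\log\lambda$, at one point suggesting the fix is to ``get the Mertens constant down to below 1'' or to abandon $\lambda\sim\exp(c/\delta)$ in favour of $\lambda\asymp x$. But there is no such multiplicative constant: Mertens gives $\sum_{p\le y}1/p = \log\log y + M + o(1)$, so the constant enters \emph{additively}, and what you correctly arrive at in your last paragraph, $\log P_\lambda \le \lambda(\log\log\lambda + O(1))$, already has coefficient exactly $1$ on $\log\log\lambda$. The missing step is then a one-liner you never state explicitly: with $\log\lambda = c/\delta$ one has $\log\log\lambda = \log\frac1\delta - \log\frac1c$, so the exponent is $\lambda\bigl(-\log\frac1c + O(1) + 0.05\bigr)$, which is $\le -\lambda/2$ once $c$ is small enough. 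Your invocation of $\log\log x$ in the final sentence (``observe $\log\log\lambda \le \log\log x + O(1)$ and ... $\log(1/\delta)\le\log\log x + O(1)$'') is a red herring — bounding both quantities from above by the same thing cannot show one beats the other. The hypothesis $\delta\gg 1/\log x$ enters only to guarantee $\lambda = \exp(c/\delta) \le x/10$ as required by Lemma~\ref{lem:euler product large}, not to close the exponent estimate. So: right approach and, buried at the end, the right idea, but the reasoning is muddled by the multiplicative-versus-additive confusion and the decisive cancellation is asserted rather than shown.
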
	
\begin{proof}
We derive this result from Lemma~\ref{lem:euler product large}. Notice that 
$$P_{\lambda} \le \prod_{p \le 10 \lambda} \Big(1+\frac{1}{p}\Big)^{\lambda} \ll (C_0 \log \lambda)^{\lambda}$$ for some $C_0>0$. We set $ \log \lambda =  \frac{1}{2C_0 \delta}$. Then  for large $x$ and small $\delta$, \eqref{eqn: delta 4} gives 
$$\P \l( \prod_{p\le x}\l(1-\frac{f(p)}{p}\r)^{-1} \le   \delta \r) \ll \exp \l(-\frac{\lambda}{2} \r)\ll \exp(- \exp(\frac{c}{\delta})),$$ for some $c>0$.

\end{proof}

\begin{lemma}
Let $S_x$ be defined as in \eqref{eqn: S_x}, $k\ge 0$ and $3/2< \sigma <2$ to be optimized. Then for any $\delta >0,$ and $R>0$ such that $2R^{\sigma} > k^2$,
\begin{equation}\label{eqn: asymp 2}
  \P(S_x \ge \delta) \le \frac{1}{\delta^{2k} x^{k(1-\sigma/2)}} \exp \left(  16k^{2/\sigma} + 4k \sum_{p\le R} \frac{1}{p^{\sigma/2}} \right).
\end{equation}
\end{lemma}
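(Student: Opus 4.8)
The plan is to re-run the moment computation of Lemma~\ref{truncationprobability}, but instead of keeping the finite product $S(R,k,\sigma)$ unevaluated for numerical purposes, I estimate it asymptotically in $k$ and $R$. Starting from the second moment bound \eqref{eqn: 2k moments}, namely
\[
\E(S_x^{2k}) \le \frac{1}{x^{k(2-\sigma)}} \prod_{p<x} \left(\frac{(1-\tfrac{1}{p^{\sigma/2}})^{-2k} + (1+\tfrac{1}{p^{\sigma/2}})^{-2k}}{2}\right),
\]
I would note that a single moment inequality $x^{k(2-\sigma)} = x^{2k(1-\sigma/2)}$ already gives the claimed power $x^{k(1-\sigma/2)}$ after taking the $2k$-th root in Markov, so the arithmetic factor to control is $\prod_{p<x}(\cdots)$. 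For $p\le R$ I would bound $(1-p^{-\sigma/2})^{-2k} + (1+p^{-\sigma/2})^{-2k} \le 2\exp(4k p^{-\sigma/2} + O(k^2 p^{-\sigma}))$ — more crudely, just $\le 2\exp(C k p^{-\sigma/2})$ for $p\le R$ is enough for the form stated, since the $k^2 p^{-\sigma}$ terms summed over $p\le R$ contribute $O(k^2 R^{1-\sigma})$ which is absorbed in the first exponent given $2R^\sigma > k^2$ — yielding the factor $\exp\bigl(4k\sum_{p\le R} p^{-\sigma/2}\bigr)$. For $p>R$ I would reuse exactly the argument proving \eqref{eqn: claim}, bounding each factor by $(1-p^{-\sigma})^{-(2k^2+2k)}$, and then estimate $\prod_{p>R}(1-p^{-\sigma})^{-(2k^2+2k)} \le \exp\bigl((2k^2+2k)\sum_{p>R}2p^{-\sigma}\bigr) \le \exp\bigl(C k^2 R^{1-\sigma}/\log R\bigr)$ by comparison with $\int_R^\infty t^{-\sigma}\,dt/\log t$. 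The condition $2R^\sigma > k^2$, i.e. $R^{1-\sigma} \le (2/R)\cdot R^{2-\sigma} \lesssim R^{2-\sigma}/k^{(2-\sigma)/\sigma}\cdot$(something), is what lets me bound $k^2 R^{1-\sigma}$ by a constant times $k^{2/\sigma}$: from $R \gtrsim k^{2/\sigma}$ one gets $k^2 R^{1-\sigma} \lesssim k^2 \cdot k^{2(1-\sigma)/\sigma} = k^{2/\sigma}$, which is precisely the $16k^{2/\sigma}$ appearing in \eqref{eqn: asymp 2}.

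Assembling these pieces gives
\[
\E(S_x^{2k}) \le \frac{1}{x^{2k(1-\sigma/2)}} \exp\Bigl(C k^{2/\sigma} + 4k\sum_{p\le R}\frac{1}{p^{\sigma/2}}\Bigr),
\]
and then Markov's inequality in the form $\P(S_x\ge\delta) \le \E(S_x^{2k})/\delta^{2k}$ delivers
\[
\P(S_x\ge\delta) \le \frac{1}{\delta^{2k} x^{k(1-\sigma/2)}}\exp\Bigl(16k^{2/\sigma} + 4k\sum_{p\le R}\frac{1}{p^{\sigma/2}}\Bigr)
\]
once the absolute constant is checked to be at most $16$ (this is where one must be a little careful about the Taylor-expansion constants in the $p\le R$ estimate and the $\int t^{-\sigma}/\log t$ comparison for $p>R$, but there is ample slack). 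Note I do not need the hypothesis $k(2-\sigma)>1$ here — unlike Lemma~\ref{lem: exceptional prob for smooth}, this is a pointwise-in-$x$ bound, so no summation over $x$ is performed and no convergence of $\sum_x x^{-k(2-\sigma)}$ is required.

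The only mild subtlety — not really an obstacle — is making the exponent come out as clean functions of $k$ and $R$ rather than of $\sigma$ and $R$ jointly: one wants $16k^{2/\sigma}$ to dominate all the $k^2 R^{1-\sigma}$-type error terms uniformly for $3/2<\sigma<2$, and this is exactly guaranteed by the standing assumption $2R^{\sigma}>k^2$. The genuinely substantive work has already been done in the proof of \eqref{eqn: claim} in Lemma~\ref{truncationprobability}; the present lemma is its asymptotic repackaging, to be fed (together with Lemma~\ref{lem: asymp large product}) into the decomposition \eqref{eqn: decomp} with $\delta \asymp 1/\log x$, $k$ and $R$ chosen as slowly growing functions of $x$, so that both exceptional probabilities are $\exp(-\exp(\tfrac{\log x}{C\log\log x}))$ and Theorem~\ref{thm: truncate} follows.
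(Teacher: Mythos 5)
Your proposal follows essentially the same route as the paper: start from the moment bound of Lemma~\ref{truncationprobability}, split the Euler product at a cutoff $R\asymp k^{2/\sigma}$, bound the $p\le R$ factors by $\exp\bigl(4k\sum_{p\le R}p^{-\sigma/2}\bigr)$ and the $p>R$ factors via \eqref{eqn: claim} by $\exp\bigl(16k^{2/\sigma}\bigr)$, then apply Markov. One small caveat worth fixing: there is no ``taking the $2k$-th root'' in Markov's inequality---$\P(S_x\ge\delta)\le\E(S_x^{2k})/\delta^{2k}\le\delta^{-2k}x^{-k(2-\sigma)}\exp(\cdots)$ directly carries the full power $x^{-k(2-\sigma)}=x^{-2k(1-\sigma/2)}$, and the lemma's stated (weaker) power $x^{-k(1-\sigma/2)}$ follows simply because $2k(1-\sigma/2)\ge k(1-\sigma/2)$ when $\sigma<2$.
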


\begin{proof}
Apply Lemma \ref{truncationprobability} to get
$$ \P(S_x \ge \delta )  \le  \frac{S(R, k, \sigma)}{\delta^{2k}x^{k(2-\sigma)}}.		  $$

Choosing the cutoff $R = k^{2/\sigma}$, let us bound $S(R, k, \sigma)$. For the primes less than $R$:
$$\prod_{p \le R} \left(\frac{(1- \frac{1}{p^{\sigma/2}})^{-2k} + (1+ \frac{1}{p^{\sigma/2}})^{-2k}}{2}\right) \le \prod_{p \le R} (1-\frac{1}{p^{\sigma/2}})^{-2k}.$$
From the inequality $1-x\ge e^{-2x}$ for small $x$, the above product is 
$$\le \exp\Big(4k \sum_{p \le R} \frac{1}{p^{\sigma/2}}\Big). $$
Meanwhile for the contribution of $p > R$ to $S(R,k,\sigma)$ we bound it similarly to get
$$\prod_{p>R} (1-\frac{1}{p^\sigma})^{-2k^2-k} \le \exp\Big(2(2k^2+k) \sum_{p > R} \frac{1}{p^{\sigma}}\Big) \le \exp\Big(16 k^2R^{1-\sigma}\Big) = \exp (16 k^{2/\sigma}).$$
This completes the proof.
\end{proof}





\begin{proof}[Proof of Theorem~\ref{thm: truncate}]
We choose $k = x^{\frac{1}{\log \log x}}, \delta = \frac{ \log \log x}{\log x}, \sigma = 2- \frac{C_1 \log \log x}{\log x} $ for suitable large constant $C_1$. The exceptional probability in \eqref{eqn: asymp 1} is clearly satisfied the bound. 
As for the exceptional probability in \eqref{eqn: asymp 2}, we bound
\[4k \sum_{p\le R} \frac{1}{p^{\sigma/2}} \le 5 k \log \log R \le  5 x^{\frac{1}{\log \log x}} \log \log x, \]
in the given range of $\sigma$. 
As $C_1$ is large enough, the right hand side of \eqref{eqn: asymp 2} is (for large $x$)
\[\le \exp(2k \log \log x- C_1 k\log\log  x/2 + 100k   ) \le \exp (-C_1k\log \log x /3),   \]
which completes the proof by plugging the choice of $k$.   
\end{proof}






\bibliographystyle{plain}
	\bibliography{turan}{}

\end{document}